\newtheorem{theorem}{Theorem}[section]
\newtheorem{lemma}[theorem]{Lemma}
\newtheorem{proposition}[theorem]{Proposition}
\theoremstyle{definition}
\newtheorem{definition}[theorem]{Definition}
\newtheorem{remark}[theorem]{Remark}
\numberwithin{equation}{section}
\newcommand{\blankbox}[2]
\begin{document}
\title{Non-weight modules over a Schr{\"o}dinger-Virasoro type algebra}
\author{Jiajia Wen}
\address{School of Mathematics, Hangzhou Normal University,
Hangzhou, 311121, China}
\email{2020111008010@stu.hznu.edu.cn}

\author{Zhongyin Xu}
\address{School of Mathematics, Hangzhou Normal University,
Hangzhou, 311121, China}
\email{2019210216022@stu.hznu.edu.cn}

\author{Yanyong Hong}
\address{School of Mathematics, Hangzhou Normal University,
Hangzhou, 311121, China}
\email{yyhong@hznu.edu.cn}
\subjclass[2020]{17B10, 17B65, 17B68}
\keywords{Non-weight module, Schr{\"o}dinger-Virasoro algebra}
\thanks{This work was supported by the National Natural Science Foundation of China (No. 12171129, 11871421), the Zhejiang Provincial Natural Science Foundation of China (No. LY20A010022) and the Scientific Research Foundation of Hangzhou Normal University (No. 2019QDL012).}
\begin{abstract}
In this paper, we give a complete classification of all free $U(\mathbb{C}L_0 \oplus \mathbb{C}Y_0\oplus \mathbb{C}M_0)$-modules of rank 1 over a Schr{\"o}dinger-Virasoro type algebra $\mathfrak{tsv}$.
\end{abstract}
\maketitle
\section{Introduction}
Throughout this paper, we denote by $\mathbb{C}$, $\mathbb{C}^*$, $\mathbb{Z}$, $\mathbb{Z}^*$, $\mathbb{Z}_+$ and $\mathbb{N}$ the sets of all
complex numbers, nonzero complex numbers, integers, nonzero integers, non-negative integers and positive integers respectively.
All vector spaces (resp. algebras, modules) are considered to be over $\mathbb{C}$. For a non-zero polynomial $f(x_1, x_2, x_3)\in \mathbb{C}[x_1, x_2, x_3]$, we denote the degree of $f(x_1, x_2, x_3)$ in $x_i$ by $deg_{x_i}(f(x_1, x_2, x_3))$ for any $i\in \{1, 2, 3\}$.

In this paper, our studying object is a Schr{\"o}dinger-Virasoro type algebra $\mathfrak{tsv}$. $\mathfrak{tsv}$ is an infinite-dimensional Lie algebra with basis $\{L_m, Y_m, M_m|m\in \mathbb{Z}\}$ satisfying the following relations
\begin{align}
&[L_{m},L_{n}]=(m-n)L_{m+n}\label{ll},\\
&[L_{m},Y_{n}]=-(m+n)Y_{m+n}\label{ly},\\
&[L_{m},M_{n}]=-(3m+n)M_{m+n}\label{lm},\\
&[Y_{m},Y_{n}]=(m-n)M_{m+n}\label{yy},\\
&[Y_{m},M_{n}]=[M_{m},M_{n}]=0\label{00},
\end{align}
where $m$, $n\in\mathbb{Z}$. It is a twisted case of the deformative Schr{\"o}dinger-Virasoro algebras,
which was introduced in study of the Schr{\"o}dinger equation of free particles in non-equilibrium statistical physics (see \cite{H1, HU, RU}).  Central extensions,  non-degenerate symmetric invariant bilinear forms,  derivations and automorphisms, 2-cocycles and Harish-Chandra modules of general twisted deformative Schr{\"o}dinger-Virasoro algebras were investigated in \cite{RU, FLZ, WLX, L, Liu} respectively. There also have been some works only on $\mathfrak{tsv}$.   Lie bialgebra structures, modules of the intermediate series, the Lie conformal algebra and quantization of $\mathfrak{tsv}$ were studied in \cite{FLL, FDL, WXX, WX} respectively. From these results, it is found that $\mathfrak{tsv}$ has some particular properties which are different from many other cases of the twisted deformative Schr{\"o}dinger-Virasoro Lie algebras. Therefore, it is interesting to concentrate on investigating structure theory and representation theory of $\mathfrak{tsv}$.

Besides weight module theory of Lie algebras, non-weight module has drawn a lot of attention. Recently, an important class of non-weight modules which are called free $U(\mathfrak{h})$-modules were widely investiagted, where $U(\mathfrak{h})$ is the universal enveloping algebra
of the Cartan subalgebra $\mathfrak{h}$. The free $U(\mathfrak{h})$-modules over $sl_{n+1}$ were first
constructed in \cite{N1}, which were also introduced by a different method in \cite{TZ1}. Since then, there have been  many works on studying such modules over kinds of Lie algebras, for example,  finite-dimensional simple Lie algebras \cite{N2}, Kac-Moody algebras \cite{CTZ}, Virasoro algebra \cite{LZ, TZ}, Heisenberg-Virasoro algebra and $W(2,2)$ algebra \cite{CG}, $Vir(a, b)$ \cite{HCS}, Schr{\"o}dinger-Virasoro Lie algebra \cite{W, WZ}, Block algebra \cite{GWL, CY} and so on. Moreover, such modules over some Lie superalgebras such as  basic Lie superalgebras \cite{CZ}, super-Virasoro algebra \cite{YYX1}, untwisted $N = 2$ superconformal algebras \cite{YYX2} were also studied. In this paper, we plan to classify all free $U(\mathbb{C}L_0 \oplus \mathbb{C}Y_0\oplus \mathbb{C}M_0)$-modules of rank 1 over $\mathfrak{tsv}$. It should be pointed out that the free $U(\mathbb{C}L_0)$-modules of rank 1 over general twisted deformative Schr{\"o}dinger-Virasoro Lie algebras were characterized in \cite{CC}. Obviously, it is more difficult to consider free $U(\mathbb{C}L_0 \oplus \mathbb{C}Y_0\oplus \mathbb{C}M_0)$-modules of rank 1 than free $U(\mathbb{C}L_0)$-modules of rank 1. It can be seen from the computing complexity in this paper. Our main result is Theorem \ref{theorem of isomorphism}, which gives a complete classification of all modules over $\mathfrak{tsv}$  which are free $U({\mathbb{C}}L_0\oplus{\mathbb{C}}M_0\oplus{\mathbb{C}}Y_0)$-modules of rank 1.

This paper is organized as follows. In Section 2, we introduce a class of non-weight $\mathfrak{tsv}$-modules which are free $U({\mathbb{C}}L_0\oplus{\mathbb{C}}M_0\oplus{\mathbb{C}}Y_0)$-modules of rank 1 and investigate the module structures of these modules.
Section 3 is devoted to classifying all modules over $\mathfrak{tsv}$  which are free $U({\mathbb{C}}L_0\oplus{\mathbb{C}}M_0\oplus{\mathbb{C}}Y_0)$-modules of rank 1.

\section{ Preliminaries}

By the definition of $\mathfrak{tsv}$, we obtain some important formulas, which will be frequently used in the sequent.

\begin{proposition}\label{lie} In the universal enveloping algebra $U(\mathfrak{tsv})$ of $\mathfrak{tsv}$, the following formulas hold for any $i\in \mathbb{Z}_+$:
\begin{align}
&M_{m}L^i_{0}=(L_0+m)^iM_m\label{mli},\\
&M_{m}M^i_{0}=M^i_0M_m\label{mmi},\\
&M_{m}Y^{i}_{0}=Y^i_{0}M_m\label{myi},\\
&Y_{m}L^i_{0}=(L_0+m)^iY_m\label{yli},\\
&Y_{m}M^i_{0}=M^i_0Y_m\label{ymi},\\
&L_mL^i_0=(L_0+m)^iL_m\label{lli},\\
&Y_{m}Y^i_{0}=Y^i_{0}Y_m+miY^{i-1}_{0}M_{m}\label{yyi},\\
&L_mM^i_0=M^i_0L_m-3miM^{i-1}_{0}M_m\label{lmi},\\
&L_mY^i_0=Y^i_0L_m-miY^{i-1}_{0}Y_m-\frac{i(i-1)}{2}m^2Y^{i-2}_{0}M_m\label{lyi}.
\end{align}
\end{proposition}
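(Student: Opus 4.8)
The plan is to establish all nine identities by induction on $i$, using only the defining brackets \eqref{ll}--\eqref{00}; the base case $i=0$ is trivial in each instance. The first step is to record the single-step commutators obtained by specializing one index to $0$: from \eqref{ll}, \eqref{ly}, \eqref{lm} one gets $[L_0,L_m]=-mL_m$, $[L_0,Y_m]=-mY_m$, $[L_0,M_m]=-mM_m$; from \eqref{00} one gets $[Y_0,M_m]=0$, $[M_0,Y_m]=0$, $[M_0,M_m]=0$; and from \eqref{yy}, \eqref{lm}, \eqref{ly} one gets $[Y_m,Y_0]=mM_m$, $[L_m,M_0]=-3mM_m$, $[L_m,Y_0]=-mY_m$.

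These commutators sort the nine formulas into three families. The three weight-shift identities \eqref{mli}, \eqref{yli}, \eqref{lli} all follow from a common pattern: whenever $[L_0,X]=-mX$ we have $XL_0=(L_0+m)X$, and an immediate induction gives $XL_0^i=(L_0+m)^iX$, applied to $X\in\{M_m,Y_m,L_m\}$. The three commuting identities \eqref{mmi}, \eqref{myi}, \eqref{ymi} are even simpler, since the relevant generators commute ($[M_m,M_0]=[M_m,Y_0]=[Y_m,M_0]=0$), so moving $M_0^i$ or $Y_0^i$ across costs nothing.

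The remaining three identities \eqref{yyi}, \eqref{lmi}, \eqref{lyi} are genuinely inductive, and here the order of proof matters. For \eqref{yyi} I would write $Y_mY_0^i=(Y_0Y_m+mM_m)Y_0^{i-1}$, apply the induction hypothesis to $Y_mY_0^{i-1}$, and use \eqref{myi} (that $M_m$ commutes with $Y_0$) to pull $M_mY_0^{i-1}=Y_0^{i-1}M_m$; the two $Y_0^{i-1}M_m$ contributions, with coefficients $m(i-1)$ and $m$, then add to give the coefficient $mi$. Formula \eqref{lmi} runs identically, with \eqref{mmi} in place of \eqref{myi} and the constant $-3m$ from $[L_m,M_0]$ replacing $m$.

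The main obstacle is \eqref{lyi}, the only formula that acquires a term quadratic in $m$. Here, after writing $L_mY_0^i=(Y_0L_m-mY_m)Y_0^{i-1}$ and applying the induction hypothesis to $L_mY_0^{i-1}$, the term $-mY_mY_0^{i-1}$ must be expanded using the already-proved \eqref{yyi}, which injects an extra $Y_0^{i-2}M_m$ contribution. Collecting terms, the coefficient of $Y_0^{i-1}Y_m$ assembles as $-m(i-1)-m=-mi$, and the coefficient of $Y_0^{i-2}M_m$ as $-\tfrac{(i-1)(i-2)}{2}m^2-(i-1)m^2=-\tfrac{i(i-1)}{2}m^2$, using $\tfrac{(i-1)(i-2)}{2}+(i-1)=\tfrac{i(i-1)}{2}$. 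The only things to watch are this coefficient bookkeeping and the dependency of \eqref{lyi} on \eqref{yyi}, so the induction on the coupled formulas should be arranged with \eqref{yyi} and \eqref{lmi} settled before \eqref{lyi}.
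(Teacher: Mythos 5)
Your proof is correct and follows essentially the same route as the paper: the base cases and easy formulas are dispatched directly, and \eqref{yyi}, \eqref{lmi}, \eqref{lyi} are proved by induction on $i$ using the single-step commutators, with \eqref{yyi} established before \eqref{lyi}. The only cosmetic difference is that you peel off the leftmost $Y_0$ (so \eqref{lyi} invokes the full \eqref{yyi}), whereas the paper peels off the rightmost factor and only needs the $i=1$ cases; the coefficient bookkeeping is identical.
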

\begin{proof}
Obviously, (\ref{mli})-(\ref{lyi}) hold when $i=0$. When $i\geq 1$,
we only check $\eqref{yyi}-\eqref{lyi}$, since others can be easily verified by induction.

For $\eqref{yyi}$, we prove it by induction.
For $i=1$, the formula $Y_mY_0=Y_0Y_m+mM_m$ is straightforward.
Assume that $\eqref{yyi}$ holds for $i=k-1$,
i.e. $Y_mY^{k-1}_0=Y^{k-1}_0Y_m+m(k-1)Y_0^{k-2}M_m$
holds. Then for $i=k$, we have

\begin{eqnarray*}
Y_mY^k_0&=&Y^{k-1}_0Y_mY_0+m(k-1)Y^{k-2}_0M_mY_0\\
&=&Y^{k-1}_0(Y_0Y_m+mM_m)+m(k-1)Y^{k-2}_0Y_0M_m\\
&=&Y^k_0Y_m+mkY^{k-1}_0M_m,
\end{eqnarray*}
which implies that $\eqref{yyi}$ holds.

For $\eqref{lmi}$, we also use induction to prove it. For $i=1$, $L_mM_0=M_0L_m-3mM_m$  obviously holds. Assume that  $\eqref{lmi}$ holds for $i=k-1$, i.e.
\begin{align*}
L_mM^{k-1}_0=M^{k-1}_0L_m-3m(k-1)M^{k-2}_0M_m
\end{align*}
holds. Thus for $i=k$, we have
$$
\begin{aligned}
L_mM^k_0&=M^{k-1}_0L_mM_0-3m(k-1)M^{k-2}_0M_m\\
&=M^{k-1}_0(M_0L_m-3mM_m)-3m(k-1)M^{k-2}_0M_0M_m\\
&=M^k_0L_m-3mkM^{k-1}_0M_m,
\end{aligned}
$$
which implies that $\eqref{lmi}$ holds.

For \eqref{lyi}, we still use induction to prove it. For $i=1$, $L_mY_0=Y_0L_m-mY_m$ is straightforward. Assume that \eqref{lyi} holds for $i=k-1$, i.e.
\begin{align*}
L_mY^{k-1}_0=Y^{k-1}_0L_m-m(k-1)Y^{k-2}_0Y_m-\frac{(k-1)(k-2)}{2}m^2Y^{k-3}_0M_m
\end{align*}
holds. Thus for $i=k$, we have
$$
\begin{aligned}
L_mY^k_0&=Y^{k-1}_0L_mY_0-m(k-1)Y^{k-2}_0Y_mY_0-\frac{(k-1)(k-2)}{2}m^2Y^{k-3}_0M_mY_0\\
&=Y^{k-1}_0(Y_0L_m-mY_m)-m(k-1)Y^{k-2}_0(Y_0Y_m+mM_m)\\
&\qquad -\frac{(k-1)(k-2)}{2}m^2Y^{k-3}_0Y_0M_m\\
&=Y^k_0L_m-mkY^{k-1}_0Y_m-\frac{k(k-1)}{2}m^2Y^{k-2}_0M_m,
\end{aligned}
$$
which implies that \eqref{lyi} holds.
\end{proof}

\begin{definition}
Let $\mathbb{C}\left[s,t,v\right]$ be the polynomial algebra in variables $s$, $t$ and $v$ with coefficients in $\mathbb{C}$. For $\lambda\in\mathbb{C}^*$, $a$, $b\in\mathbb{C}$,  $q\in \mathbb{N}$, $\tau_i$, $\gamma_j\in t\mathbb{C}[t]$, $\gamma_0=0$, $i\in \{0, 1, \cdots, q\}$ and $j\in \mathbb{Z}$ , define the action of $\mathfrak{tsv}$ on $\Phi(\lambda, a, b, q,\{\tau_i\}, \{\gamma_j\})=\mathbb{C}\left[s,t,v\right]$ as follows: for any $f(s,t,v)\in\mathbb{C}\left[s,t,v\right]$,
\begin{align}
	\begin{split}\label{dingyi1}
	L_m. f(s,t,v)&=\lambda^m\Bigg\{\bigg[s+\frac{m}{t}\Big(\sum\limits_{i=0}^{q}
	(-\tau_i+\frac{3}{i+1}\tau_i-\frac{3t}{i+1}\frac{\partial \tau_i}{\partial t})v^{i+1}-\frac{m}{2}t\sum\limits_{i=0}^{q}i\tau_iv^{i-1}\\
   &-\frac{m}{2}\left(\sum\limits_{i=0}^{q}\tau_iv^i\right)^2+3amtv-(am^2t+b)\sum\limits_{i=0}^{q}\tau_iv^i\Big)+\frac{\gamma_m}{t}\bigg]\\
   &\cdot f(s+m,t,v)-3mt\frac{\partial}{\partial t}f(s+m,t,v)-m\left(m\sum\limits_{i=0}^{q}\tau_iv^i+v+am^2t+b\right)\\
   &\cdot\frac{\partial}{\partial v}f(s+m,t,v)-\frac{m^2}{2}t\frac{\partial^2}{\partial v^2}f(s+m,t,v)\Bigg\},
   \end{split}
\end{align}
\begin{align}
\label{dingyi2}&M_m. f(s,t,v)=\lambda^mt f(s+m,t,v),\\
\label{dingyi3}&Y_m. f(s,t,v)=\lambda^m \Bigg[\bigg(m\sum\limits_{i=0}^{q}\tau_iv^i+v+am^2t+(1-\delta_ {m,0})b\bigg)f(s+m,t,v)\\
&+ mt\frac{\partial}{\partial v}f(s+m,t,v)\Bigg],\nonumber
\end{align}
where $m\in\mathbb{Z}$.
\end{definition}

\begin{proposition}\label{lie}
All $\Phi(\lambda, a, b, q,\{\tau_i\}, \{\gamma_j\})$ are $\mathfrak{tsv}$-modules under the actions $\eqref{dingyi1}-\eqref{dingyi3}$.
\end{proposition}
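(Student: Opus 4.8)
The plan is to verify directly that the operators defined by \eqref{dingyi1}--\eqref{dingyi3} respect the defining brackets \eqref{ll}--\eqref{00}; since $\{L_m,Y_m,M_m\mid m\in\mathbb Z\}$ spans $\mathfrak{tsv}$, it suffices to check that $X.(Z.f)-Z.(X.f)=[X,Z].f$ for every $f\in\mathbb C[s,t,v]$ and every ordered pair $X,Z$ of these basis elements. It is convenient to write each action as $\lambda^kD_kE^k$, where $(E^kf)(s,t,v)=f(s+k,t,v)$ is the shift operator and $D_k$ is a differential operator in $t,v$: explicitly $M_k=\lambda^k t\,E^k$, $Y_k=\lambda^k(W_k+kt\,\partial_v)E^k$ with $W_k=k\sum_i\tau_iv^i+v+ak^2t+(1-\delta_{k,0})b$, and $L_k=\lambda^k\bigl(s+P_k+Q_k\partial_t+R_k\partial_v+S_k\partial_v^2\bigr)E^k$ with $Q_k=-3kt$, $R_k=-k(k\sum_i\tau_iv^i+v+ak^2t+b)$, $S_k=-\tfrac{k^2}{2}t$, and $P_k$ the remaining purely $t,v$-dependent scalar coefficient. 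The only source of $s$-dependence among all coefficients is the single explicit linear term $s$ in $L_k$; the operator relation $E^k\circ s=(s+k)\circ E^k$, together with the fact that $t,v$-functions and $\partial_t,\partial_v$ commute with $E^k$, reduces every bracket to an identity of differential operators in $t,v$ with coefficients polynomial in $s,t,v,m,n$.

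I would dispatch the relations in increasing order of difficulty. The relations $[M_m,M_n]=0$ and $[Y_m,M_n]=0$ are immediate, since $M_k$ is multiplication by $\lambda^k t$ composed with $E^k$ and all $M$- and $Y$-coefficients are $s$-independent, so after collecting the shifts into $E^{m+n}$ the operators commute. For $[Y_m,Y_n]=(m-n)M_{m+n}$ one expands $(W_m+mt\,\partial_v)(W_n+nt\,\partial_v)$; the products $W_mW_n$, the $\partial_v$-coefficients $ntW_m+mtW_n$, and the $\partial_v^2$-coefficients $mnt^2$ are all symmetric in $m,n$ and cancel, leaving the scalar term $mt\,\partial_vW_n-nt\,\partial_vW_m=(m-n)t$, which is exactly the coefficient of $M_{m+n}$. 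For $[L_m,M_n]=-(3m+n)M_{m+n}$ one applies $L_m$ to the factor $\lambda^n t$: the $s$-shift asymmetry contributes $-nt$ while $Q_m\partial_t$ hitting $t$ contributes $Q_m=-3mt$, giving total coefficient $-(3m+n)t$, all other terms cancelling against the $M_nL_m$ side.

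The two genuinely laborious relations are $[L_m,Y_n]=-(m+n)Y_{m+n}$ and, above all, $[L_m,L_n]=(m-n)L_{m+n}$, the latter being the main obstacle. Writing $L_k=\lambda^k\mathcal D_kE^k$ with $\mathcal D_k=s+P_k+Q_k\partial_t+R_k\partial_v+S_k\partial_v^2$ and pushing the shifts to the right, the required relation becomes the operator identity $\mathcal D_m\widehat{\mathcal D}_n-\mathcal D_n\widehat{\mathcal D}_m=(m-n)\mathcal D_{m+n}$, where $\widehat{\mathcal D}_n$ is $\mathcal D_n$ with its explicit $s$ advanced to $s+m$; I would verify it by comparing separately the coefficients of $1,\partial_t,\partial_v,\partial_v^2$ and of the higher monomials $\partial_v^3,\partial_v^4,\partial_t\partial_v^2,\partial_t^2$. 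The higher monomials cancel because the relevant products (such as $S_mS_n$ and $Q_mS_n+S_mQ_n$) are symmetric in $m,n$, matching their absence in $\mathcal D_{m+n}$; the coefficient of $s$ collapses to $(m-n)s$; and since multiplication operators commute, the cross terms $P_mQ_n$, $P_mP_n$ and so on cancel, so the first-order coefficients reduce to $mQ_m-nQ_n=(m-n)Q_{m+n}$ and the analogous $(m-n)R_{m+n}$, $(m-n)S_{m+n}$. The crux is the scalar coefficient, which after these cancellations becomes
\begin{align*}
&mP_m-nP_n+Q_m\partial_tP_n-Q_n\partial_tP_m\\
&\qquad+R_m\partial_vP_n-R_n\partial_vP_m+S_m\partial_v^2P_n-S_n\partial_v^2P_m=(m-n)P_{m+n}.
\end{align*}
This is precisely where the elaborate form of $P_k$ is needed: the combination $-\tau_i+\tfrac{3}{i+1}\tau_i-\tfrac{3t}{i+1}\partial_t\tau_i$, the quadratic term $-\tfrac m2(\sum_i\tau_iv^i)^2$, and the $3amtv$ and $(am^2t+b)$ contributions are arranged so that, once the $\tfrac{m}{t}$ and $\tfrac1t$ prefactors are cleared, both sides agree as polynomials in $t,v$. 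The hypotheses $\tau_i,\gamma_j\in t\mathbb C[t]$ ensure these prefactors yield genuine polynomials, so that $\Phi$ does land in $\mathbb C[s,t,v]$, while the $\gamma$-part of the identity yields a relation among the $\gamma_j$ whose degenerate case $n=0$ is covered exactly by $\gamma_0=0$. The relation $[L_m,Y_n]=-(m+n)Y_{m+n}$ is handled in the same way but is shorter: the interaction of $S_m\partial_v^2$ with the $\partial_v$ of $Y_n$, together with the $\delta_{m,0}$ and $b$ bookkeeping that governs the degenerate value $m+n=0$, collapses the scalar and $\partial_v$ coefficients to $-(m+n)W_{m+n}$ and $-(m+n)^2t$ respectively. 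Assembling these coefficient comparisons completes the verification.
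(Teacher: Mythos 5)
Your overall strategy---pushing the shift operators to the right and verifying each bracket as an identity of differential operators in $t,v$---is the same direct verification the paper carries out (the paper computes $L_m.Y_n.f-Y_n.L_m.f$ against a general $f$ and dismisses the remaining brackets as ``similar''), and your treatment of $[M_m,M_n]$, $[Y_m,M_n]$, $[Y_m,Y_n]$ and $[L_m,M_n]$ is correct. However, your accounting for the relation you yourself single out as the main obstacle, $[L_m,L_n]=(m-n)L_{m+n}$, contains a genuine error. You claim that once the multiplication cross terms cancel, ``the first-order coefficients reduce to $mQ_m-nQ_n=(m-n)Q_{m+n}$ and the analogous $(m-n)R_{m+n}$, $(m-n)S_{m+n}$.'' That cross-term-free identity does hold for $Q_k=-3kt$, but it is false for $R$ and $S$: with $R_k=-k\bigl(k\sum_i\tau_iv^i+v+ak^2t+b\bigr)$ and $S_k=-\tfrac{k^2}{2}t$ one computes
\[
mS_m-nS_n-(m-n)S_{m+n}=\tfrac{1}{2}mn(m-n)\,t\neq 0,
\]
\[
mR_m-nR_n-(m-n)R_{m+n}=mn(m-n)\sum_i\tau_iv^i+2amn(m-n)(m+n)\,t\neq 0.
\]
So if the plan were executed exactly as written, the coefficient comparison would fail at precisely this step.

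The source of the error is that only cross terms in which \emph{both} factors act by multiplication cancel; the derivative cross terms survive---exactly as in the scalar identity, where you correctly retained $Q_m\partial_tP_n-Q_n\partial_tP_m$ and its analogues but then dropped their counterparts one order up. The $\partial_v^2$-coefficient identity must also include $Q_m\partial_tS_n-Q_n\partial_tS_m$ and $2\bigl(S_m\partial_vR_n-S_n\partial_vR_m\bigr)$ (the latter produced when $S_m\partial_v^2$ hits $R_n\partial_v g$); these total $-\tfrac{1}{2}mn(m-n)t$ and exactly cancel the discrepancy above. The $\partial_v$-coefficient identity must include $Q_m\partial_tR_n-Q_n\partial_tR_m$, $R_m\partial_vR_n-R_n\partial_vR_m$, $S_m\partial_v^2R_n-S_n\partial_v^2R_m$ and, notably, $2\bigl(S_m\partial_vP_n-S_n\partial_vP_m\bigr)$, which couples the scalar coefficient $P$ into the first-order check, so that identity is not even structurally ``analogous'' to the $Q$ one. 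The same bookkeeping is what makes your ``higher monomials cancel by symmetry'' claim complete: when $S_m\partial_v^2$, $R_m\partial_v$, $Q_m\partial_t$ act on the coefficients of the second operator they generate lower-order terms, and it is these, not only the symmetric top-order products, that must be tracked. With all such terms restored your framework does go through (and is, if anything, more systematic than the paper's single worked bracket), but the reduction as stated is false, so the proof as proposed has a genuine gap at its central step.
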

\begin{proof}
For $m$, $n\in\mathbb{Z}$, according to (\ref{dingyi1})-(\ref{dingyi3}), we have
{\small $$
\begin{aligned}
&L_m. Y_n.f(s,t,v)=\lambda^{m+n}\Bigg\{\bigg[(s+D_m)\Big(n\sum\limits_{i=0}^{q}
\tau_iv^i+v+an^2t+(1-\delta_ {n,0})b\Big)-mn\Big(\sum\limits_{i=0}^{q}i\tau_iv^{i-1}\Big)\\
&\cdot (m\sum\limits_{i=0}^{q}\tau_iv^i+v+am^2t+b)
-\frac{m^2n}{2}t\sum\limits_{i=0}^{q}i(i-1)\tau_iv^{i-2}-3mnt
\sum\limits_{i=0}^{q}\frac{\partial\tau_i}{\partial t}v^i\\
&-m\Big(m\sum\limits_{i=0}^{q}\tau_iv^i+v+am^2t+b\Big)\bigg]f(s+m+n,t,v)+\bigg[-m\Big(n\sum\limits_{i=0}^{q}\tau_iv^i+v+an^2t+(1-\delta_ {n,0})b\Big)\\
&\cdot\Big(m\sum\limits_{i=0}^{q}\tau_iv^i+v+am^2t+b\Big)-m^2nt\sum\limits_{i=0}^{q}i\tau_iv^{i-1}+nt(s+D_m)-3mnt-m^2t\bigg]
\\
&\cdot\frac{\partial}{\partial v}f(s+m+n,t,v)+\bigg[-\frac{3}{2}m^2nt\sum\limits_{i=0}^{q}\tau_iv^i-\Big(mn+\frac{m^2}{2}\Big)tv-\frac{1}{2}am^2n^2t^2-am^3nt^2\bigg]
\\
&\cdot \frac{\partial^2}{\partial v^2}f(s+m+n,t,v)
+\Big(-\frac{m^2n}{2}t^2\Big)\frac{\partial^3}{\partial v^3}f(s+m+n,t,v)\\
&+\bigg[-3mt\Big(n\sum\limits_{i=0}^{q}\tau_i v^i+v+an^2t+(1-\delta_ {n,0})b\Big)\bigg]\frac{\partial}{\partial t}f(s+m+n,t,v)\\
&+(-3mnt^2)\frac{\partial^2}{\partial t\partial v}f(s+m+n,t,v)\Bigg\},
\end{aligned}$$}

{\small $$
\begin{aligned}
&Y_n. L_m. f(s,t,v)=\lambda^{m+n}\Bigg\{\bigg[(s+n+D_m)\Big(n\sum\limits_{i=0}^{q}\tau_iv^i+v+an^2t+(1-\delta_ {n,0})b\Big)+nt\frac
{\partial D_m}{\partial v}\bigg]\\
&f(s+m+n,t,v)+\bigg[-m\Big(m\sum\limits_{i=0}^{q}\tau_iv^i+v+am^2t+b\Big)\Big(n\sum\limits_{i=0}^{q}\tau_iv^i+v+an^2t+(1-\delta_ {n,0})b\Big)\\
&-mnt\Big(m\sum\limits_{i=0}^{q}i\tau_iv^{i-1}+1\Big)+nt(s+n+D_m)\bigg]\frac{\partial}{\partial v}f(s+m+n,t,v)
\end{aligned}
$$}
{\small $$
\begin{aligned}
&+\bigg[-\frac{3}{2}m^2nt\sum\limits_{i=0}^{q}\tau_iv^i-\Big(mn+\frac{m^2}{2}\Big)tv-\frac{1}{2}am^2n^2t^2-am^3nt^2\bigg]
\frac{\partial^2}{\partial v^2}f(s+m+n,t,v)\\
&+\Big(-\frac{m^2n}{2}t^2\Big)\frac{\partial^3}{\partial v^3}f(s+m+n,t,v)+\bigg[-3mt\Big(n\sum\limits_{i=0}^{q}\tau_i v^i+v+an^2t+(1-\delta_ {n,0})b\Big)\bigg]\\
&\cdot \frac{\partial}{\partial t}f(s+m+n,t,v)+(-3mnt^2)\frac{\partial^2}{\partial t\partial v}f(s+m+n,t,v) \Bigg\},
\end{aligned}
$$}
where
{\small\begin{align*}
&D_m=\frac{m}{t}\Bigg(\sum\limits_{i=0}^{q}\bigg(-\tau_i+\frac{3}{i+1}\tau_i
-\frac{3t}{i+1}\frac{\partial\tau_i}{\partial t}\bigg)v^{i+1}-\frac{m}{2}t\sum\limits_{i=0}^{q}i\tau_iv^{i-1}\\
&\quad\quad\quad\quad-\frac{m}{2}\bigg(\sum\limits_{i=0}^{q}
\tau_iv^i\bigg)^2+3amtv-(am^2t+b)\sum\limits_{i=0}^{q}\tau_iv^i\Bigg)+\frac{\gamma_m}{t}.
\end{align*}}
Thus, we obtain
{\small \begin{eqnarray*}
&&L_m.Y_n.f(s,t,v)-Y_n. L_m. f(s,t,v)\\
&=&\lambda^{m+n}\bigg\{\Big[-(m^2+n^2+2mn)\sum\limits_{i=0}^{q}\tau_iv^i-(m+n)v-a(m^3+n^3+3m^2n+3mn^2)t\\
&&-(m+(1-\delta_ {n,0})n)b\Big]f(s+m+n)+\Big[-(m^2+n^2+2mn)t\Big]\frac{\partial}{\partial v}f(s+m+n,t,v)\bigg\}\\
&=&\lambda^{m+n}\bigg\{\Big[-(m^2+n^2+2mn)\sum\limits_{i=0}^{q}\tau_iv^i-(m+n)v-a(m^3+n^3+3m^2n+3mn^2)t\\
&&-(m+n)(1-\delta_ {m+n,0})b\Big]f(s+m+n)+\Big[-(m^2+n^2+2mn)t\Big]\frac{\partial}{\partial v}f(s+m+n,t,v)\bigg\}\\
&=&-(m+n)Y_{m+n}.f(s,t,v)\\
&=&[L_m,Y_n]. f(s,t,v).
\end{eqnarray*}}
Other equalities can be checked similarly.
\end{proof}
\begin{remark}
(1) $\Phi(\lambda, a, b, q,\{\tau_i\}, \{\gamma_j\})$ is a free $U(\mathbb{C}L_0\bigoplus\mathbb{C}M_0\bigoplus\mathbb{C}Y_0)$-module of rank 1, since
$L_0. f(s,t, v)=sf(s,t,v)$, $M_0. f(s,t,v)=tf(s,t,v)$ and $Y_0. f(s,t,v)=vf(s,t,v)$ for any $f(s,t,v)\in \mathbb{C}[s, t, v]$.

(2) $\Phi(\lambda, a, b, q,\{\tau_i\}, \{\gamma_j\})$ is reducible as a module of $\mathfrak{tsv}$. In fact, it is easy to see that $t^k\mathbb{C}[s,t,v]$ is a submodule of $\Phi(\lambda, a, b, q,\{\tau_i\}, \{\gamma_j\})$ for any $k\in \mathbb{Z}_+$.
\end{remark}
\begin{proposition}
The quotient module $t^k\mathbb{C}[s,t,v]/t^{k+1}\mathbb{C}[s,t,v]$ is reducible as a module of $\mathfrak{tsv}$ for each $k\in \mathbb{Z}_+$.
\end{proposition}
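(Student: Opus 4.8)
The plan is to pass to the quotient, compute the induced $\mathfrak{tsv}$-action explicitly, and then exhibit a concrete proper nonzero submodule. First I would identify the underlying space: every element of $t^k\mathbb{C}[s,t,v]$ is congruent modulo $t^{k+1}\mathbb{C}[s,t,v]$ to a unique element of the form $t^k p(s,v)$ with $p\in\mathbb{C}[s,v]$, so the assignment $p(s,v)\mapsto t^k p(s,v)+t^{k+1}\mathbb{C}[s,t,v]$ is a linear isomorphism from $\mathbb{C}[s,v]$ onto the quotient. All subsequent computations are then carried out on such representatives, modulo $t^{k+1}\mathbb{C}[s,t,v]$.

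Next I would reduce the three actions \eqref{dingyi1}--\eqref{dingyi3} applied to a representative $t^k p(s,v)$. From \eqref{dingyi2} the explicit factor $t$ forces $M_m$ to act as $0$. In \eqref{dingyi3}, the summands $mt\frac{\partial}{\partial v}f$ and $am^2t$ carry a factor $t$, and every $\tau_i$ lies in $t\mathbb{C}[t]$, so all of these produce terms divisible by $t^{k+1}$; hence $Y_m$ reduces to $p\mapsto\lambda^m\big(v+(1-\delta_{m,0})b\big)p(s+m,v)$. The substantive computation is the reduction of \eqref{dingyi1}: I would first check that the large parenthesis multiplying $\frac{m}{t}$ is divisible by $t$ (each summand is, using $\tau_i,\ t\frac{\partial\tau_i}{\partial t}\in t\mathbb{C}[t]$) and that $\gamma_m\in t\mathbb{C}[t]$, so that $\frac{m}{t}(\cdots)+\frac{\gamma_m}{t}$ is an honest polynomial; then $\partial_t$ hitting $t^k$ turns the $-3mt\frac{\partial}{\partial t}$ term into the multiplication operator $-3mk$, the $-\frac{m^2}{2}t\frac{\partial^2}{\partial v^2}$ term dies, and in the first-order term only the coefficient $-m(v+b)$ survives. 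The outcome is that $L_m$ acts on $\mathbb{C}[s,v]$ as $p\mapsto\lambda^m\big[A_m(s,v)\,p(s+m,v)-m(v+b)\frac{\partial}{\partial v}p(s+m,v)\big]$ for a suitable polynomial $A_m(s,v)$ whose precise form will not be needed.

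With these reduced formulas in hand, I would exhibit the submodule $N$ corresponding to the ideal $(v+b)\mathbb{C}[s,v]$. The key point is that $v=-b$ is exactly the locus where the surviving first-order coefficient $-m(v+b)$ of $L_m$ vanishes. Concretely, for $p=(v+b)h$ one computes $M_m.p=0$, $Y_m.p=\lambda^m\big(v+(1-\delta_{m,0})b\big)(v+b)h(s+m,v)$, and $L_m.p=\lambda^m\big[A_m(s,v)(v+b)h(s+m,v)-m(v+b)\frac{\partial}{\partial v}\big((v+b)h(s+m,v)\big)\big]$; in every case the result is again divisible by $(v+b)$, so $N$ is invariant. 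Since $N$ contains $(v+b)$ but not $1$, it is a nonzero proper submodule, which shows that the quotient is reducible for every $k\in\mathbb{Z}_+$.

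The only nontrivial step is the reduction of the long expression \eqref{dingyi1} modulo $t^{k+1}\mathbb{C}[s,t,v]$: one must verify that the apparent $\frac1t$-singularities cancel (this is precisely where $\tau_i,\gamma_j\in t\mathbb{C}[t]$ is used) and that the unique surviving derivation term has coefficient $-m(v+b)$; everything after that is a one-line divisibility check. Equivalently, one may phrase the conclusion by observing that evaluation at $v=-b$, namely $p(s,v)\mapsto p(s,-b)$, is a surjective $\mathfrak{tsv}$-module homomorphism from the quotient onto $\mathbb{C}[s]$ whose kernel is precisely $N$.
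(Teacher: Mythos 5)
Your proposal is correct and takes essentially the same route as the paper: both identify the quotient with $t^k\mathbb{C}[s,v]$, reduce the actions \eqref{dingyi1}--\eqref{dingyi3} modulo $t^{k+1}\mathbb{C}[s,t,v]$ (getting $M_m\mapsto 0$, $Y_m$ as multiplication by $\lambda^m(v+(1-\delta_{m,0})b)$, and $L_m$ with the single surviving derivation term $-m(v+b)\frac{\partial}{\partial v}$), and then exhibit multiples of $(v+b)$ as a proper nonzero submodule. The only cosmetic difference is that the paper records the full filtration $t^k(v+b)^j\mathbb{C}[s,v]$, $j\in\mathbb{Z}_+$, of which your $N$ is the $j=1$ member.
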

\begin{proof}
Obviously, $t^k\mathbb{C}[s,t,v]/t^{k+1}\mathbb{C}[s,t,v]\cong t^k\mathbb{C}[s,v]$ as vector spaces for each $k\in \mathbb{Z}_+$.
Then the module action of $\mathfrak{tsv}$ on $t^k\mathbb{C}[s,v]$ as the natural action on the quotient module $t^k\mathbb{C}[s,t,v]/t^{k+1}\mathbb{C}[s,t,v]$ is given as follows: for any $f(s,v)\in\mathbb{C}[s,v]$,
$$
\begin{aligned}
L_m. t^k f(s,v)&=\lambda^m\Bigg\{\bigg[s+m\Big(-\sum\limits_{i=0}^{q}\tau_{i,1}(v+b)v^{i}+3amv \Big)+\gamma_{m,1}\bigg]t^kf(s+m,v)\\
&-3mkt^kf(s+m,v)-m(v+b)t^k\frac{\partial}{\partial v}f(s+m,v)\Bigg\},\\
M_m. t^kf(s,v)&=0,\\
Y_m. t^kf(s,v)&=\lambda^m (v+(1-\delta_{m, 0})b)t^kf(s+m,v),
\end{aligned}
$$
where $m\in\mathbb{Z}$, $\gamma_{m,1}$ is the coefficient of $t$ in $\gamma_m$ and $\tau_{i,1}$ is the coefficient of $t$ in $\tau_i$ for each $i\in \{0, 1, \cdots, q\}$.

It is easy to see that $ t^k(v+b)^j\mathbb{C}[s, v]$ is a submodule of $t^k\mathbb{C}[s,v]$ for each $j\in \mathbb{Z}_+$. Then the quotient module $t^k\mathbb{C}[s,t,v]/t^{k+1}\mathbb{C}[s,t,v]$ is reducible.
\end{proof}

\begin{remark}
In fact, for each $j\in \mathbb{Z}_+$, the module of $\mathfrak{tsv}$ $t^k\mathbb{C}[s,v]$ given in the proof above also has the following submodule filtration
\begin{eqnarray*}
\cdots \subseteq t^k(v+b)^{j+1} \mathbb{C}[s,v]\subseteq t^k(v+b)^{j} \mathbb{C}[s,v]\subseteq \cdots \subseteq t^k\mathbb{C}[s,v].
\end{eqnarray*}

\end{remark}

\begin{proposition}
$\Phi(\lambda, a, b, q,\{\tau_i\}, \{\gamma_j\})$ and $\Phi(\lambda', a', b', q', \{\tau_i'\}, \{\gamma'_j\} )$ are isomorphic if and only if $(\lambda, a, b, q,\{\tau_i\}, \{\gamma_j\})=(\lambda', a', b', q', \{\tau_i'\}, \{\gamma'_j\})$.
\end{proposition}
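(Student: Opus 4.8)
The nontrivial direction is ``only if''; the converse is immediate since identical parameters give literally the same actions. The plan is to first show that any $\mathfrak{tsv}$-module isomorphism between the two modules is necessarily multiplication by a nonzero scalar, and then to read off the equality of the parameters by feeding this scalar into the intertwining relation for the generators $M_m$, $Y_m$ and $L_m$ in turn.

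First I would exploit that an isomorphism $\phi\colon \Phi(\lambda,a,b,q,\{\tau_i\},\{\gamma_j\})\to\Phi(\lambda',a',b',q',\{\tau_i'\},\{\gamma_j'\})$ intertwines the actions of $L_0$, $M_0$ and $Y_0$. By the first remark these act as multiplication by $s$, $t$ and $v$ respectively on $\mathbb{C}[s,t,v]$, so $\phi$ commutes with multiplication by $s$, $t$ and $v$, hence with multiplication by every polynomial. Consequently $\phi(f)=\phi(1)\cdot f$ for all $f$, that is, $\phi$ is multiplication by $g:=\phi(1)\in\mathbb{C}[s,t,v]$. Since $\phi$ is bijective and $\mathbb{C}[s,t,v]$ is an integral domain whose units are exactly $\mathbb{C}^*$, the element $g$ must be a nonzero constant $c\in\mathbb{C}^*$. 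This reduction to a scalar is the crux of the argument; everything afterwards is bookkeeping.

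With $\phi=c\,\mathrm{id}$ in hand, I would apply $\phi$ to $M_m.1$. Using $\eqref{dingyi2}$, the identity $\phi(M_m.1)=M_m.\phi(1)$ reads $c\lambda^m t=c(\lambda')^m t$, whence $\lambda^m=(\lambda')^m$ for all $m$ and so $\lambda=\lambda'$ (take $m=1$). Next, applying $\phi$ to $Y_m.1$ and using $\eqref{dingyi3}$ (here $\partial_v 1=0$), after cancelling $c\lambda^m$ and the common summand $v$ one gets, for every $m\neq 0$, the polynomial identity $m\sum_{i=0}^{q}\tau_i v^i+am^2t+b=m\sum_{i=0}^{q'}\tau_i' v^i+a'm^2t+b'$. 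Reading the difference of the two sides as a polynomial in $m$ that vanishes for infinitely many integer values, I would compare the coefficients of $m^2$, $m^1$ and $m^0$ to obtain $a=a'$, $\sum_i\tau_i v^i=\sum_i\tau_i' v^i$ and $b=b'$; comparing the coefficients of the powers of $v$ in the middle identity then gives $\tau_i=\tau_i'$ for all $i$, and in particular $q=q'$ (under the natural normalization $\tau_q,\tau_{q'}'\neq 0$ that makes $q$ the degree).

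Finally, all of $\lambda,a,b,q,\{\tau_i\}$ now agree, so in the two expressions for $L_m$ given by $\eqref{dingyi1}$ every term except the $\gamma_m/t$ (respectively $\gamma_m'/t$) summand coincides. Applying $\phi$ to $L_m.1$ (again the derivative terms annihilate $1$ and the constant $c$) and cancelling the common part leaves $c\lambda^m\gamma_m/t=c\lambda^m\gamma_m'/t$, so $\gamma_m=\gamma_m'$ for every $m\in\mathbb{Z}$ (consistent with $\gamma_0=\gamma_0'=0$). Hence all six parameters coincide, which completes the ``only if'' direction. I expect the only genuinely delicate points to be the unit argument forcing $g$ to be a constant and the coefficient comparison that simultaneously pins down the $\tau_i$ and $q$; the remaining steps are routine substitutions into $\eqref{dingyi1}$–$\eqref{dingyi3}$.
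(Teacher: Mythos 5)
Your proposal is correct and follows essentially the same route as the paper: show the isomorphism commutes with multiplication by $s,t,v$ and is therefore multiplication by a unit $c\in\mathbb{C}^*$, then read off the parameters by applying it to $M_m.1$, $Y_m.1$ and $L_m.1$. The only (minor) differences are that the paper extracts $\lambda=\lambda'$ directly from the $Y_m$ relation rather than from $M_m$, and that you make explicit two points the paper leaves implicit, namely the units-of-$\mathbb{C}[s,t,v]$ argument and the normalization $\tau_q\neq 0$ needed for $q=q'$ to be meaningful.
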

\begin{proof}
Let $\varphi: \Phi(\lambda, a, b, q,\{\tau_i\}, \{\gamma_j\}) \longrightarrow \Phi(\lambda', a', b', q', \{\tau_i'\}, \{\gamma'_j\} )$
be an isomorphism of $\mathfrak{tsv}$-modules. Since
\begin{align*}
\varphi(L_0^iM_0^jY_0^k. 1) =L_0^iM_0^jY_0^k.\varphi(1),
\end{align*}
we have $\varphi(s^it^jv^k)=s^it^jv^k\varphi(1)$. Since $\varphi$ is an isomorphism of vector spaces, $\varphi(1)\in\mathbb{C}^*$. We have
$$
\begin{aligned}
&\varphi(Y_m.1)=\varphi\left(\lambda^m \bigg(m\sum\limits_{i=0}^{q}\tau_iv^i+v+am^2t+(1-\delta_{m,0})b\bigg)\right)\\
&\qquad\qquad=\lambda^m\left(m\sum\limits_{i=0}^{q}\tau_iv^i+v+am^2t+(1-\delta_{m,0})b\right)\varphi(1)\\
&\qquad\qquad=Y_m.\varphi(1)=(\lambda')^m\left(m\sum\limits_{i=0}^{q}(\tau'_i)v^i+v+a'm^2t+(1-\delta_{m,0})b'\right)\varphi(1).
\end{aligned}
$$
Therefore, $\lambda=\lambda'$,  $a= a'$, $b=b'$, $q=q'$ and $\tau_i=\tau_i'$ for each $i$.
Similarly,
{\small $$
\begin{aligned}
&\varphi(L_m.1)\\
&=\varphi \Bigg\{\lambda^m\bigg[s+\frac{m}{t}\Big(\sum\limits_{i=0}^{q}
(-\tau_i+\frac{3}{i+1}\tau_i-\frac{3t}{i+1}\frac{\partial \tau_i}{\partial t})v^{i+1}-\frac{m}{2}t\sum\limits_{i=0}^{q}i\tau_iv^{i-1}\\
&-\frac{m}{2}\left(\sum\limits_{i=0}^{q}\tau_iv^i\right)^2
+3amtv-(am^2t+b)\sum\limits_{i=0}^{q}\tau_iv^i\Big)+\frac{\gamma_m}{t}\bigg]\Bigg\}\\
&=\lambda^m\bigg[s+\frac{m}{t}\Big(\sum\limits_{i=0}^{q}
(-\tau_i+\frac{3}{i+1}\tau_i-\frac{3t}{i+1}\frac{\partial \tau_i}{\partial t})v^{i+1}-\frac{m}{2}t\sum\limits_{i=0}^{q}i\tau_iv^{i-1}\\
&-\frac{m}{2}\left(\sum\limits_{i=0}^{q}\tau_iv^i\right)^2+3amtv-(am^2t+b)\sum\limits_{i=0}^{q}\tau_iv^i\Big)+\frac{\gamma_m}{t}\bigg]\varphi(1)\\
&=L_m\cdot\varphi(1)\\
&=(\lambda')^m\bigg[s+\frac{m}{t}\Big(\sum\limits_{i=0}^{q}
(-\tau'_i+\frac{3}{i+1}\tau'_i-\frac{3t}{i+1}\frac{\partial \tau'_i}{\partial t})v^{i+1}-\frac{m}{2}t\sum\limits_{i=0}^{q}i\tau'_iv^{i-1}\\
&-\frac{m}{2}\left(\sum\limits_{i=0}^{q}\tau'_iv^i\right)^2+3a'mtv-(a'm^2t+b')\sum\limits_{i=0}^{q}\tau'_iv^i\Big)+\frac{\gamma'_m}{t}\bigg]\varphi(1).
\end{aligned}
$$}
Then we have $\gamma_j=\gamma_j'$ for any $j\in \mathbb{Z}$.

Then this proposition holds.
\end{proof}
\section{Main result}
In this section, we will classify all $\mathfrak{tsv}$-modules which are free $U({\mathbb{C}}L_0\oplus{\mathbb{C}}M_0\oplus{\mathbb{C}}Y_0)$-modules of rank 1.

\begin{theorem}\label{theorem of isomorphism}
Suppose that $B$ is a $\mathfrak{tsv}$-module which is a free $U({\mathbb{C}}L_0\oplus{\mathbb{C}}M_0\oplus{\mathbb{C}}Y_0)$-module of rank 1. Then $B \cong\Phi(\lambda, a, b, q, \{\tau_i\}, \{\gamma_j\})$, for some $a$, $b\in\mathbb{C}$, $\lambda\in\mathbb{C}^*$, $q\in \mathbb{N}$, $\tau_i$, $\gamma_j\in t\mathbb{C}[t]$, $i\in \{0, 1, \cdots, q\}$ and $j\in\mathbb{Z}$.
\end{theorem}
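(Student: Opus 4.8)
Since $\mathfrak{h}:=\mathbb{C}L_0\oplus\mathbb{C}M_0\oplus\mathbb{C}Y_0$ is abelian and $B$ is free of rank $1$ over $U(\mathfrak{h})=\mathbb{C}[L_0,M_0,Y_0]$, the plan is to fix a free generator and identify $B\cong\mathbb{C}[s,t,v]$ so that $L_0,M_0,Y_0$ act as multiplication by $s,t,v$. The entire module structure is then encoded in the three polynomials $\xi_m:=L_m.1$, $\eta_m:=Y_m.1$ and $\mu_m:=M_m.1$, because the commutation formulas \eqref{mli}--\eqref{lyi} let me push $M_m,Y_m,L_m$ through any $f(L_0,M_0,Y_0)$. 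Writing $f=f(s,t,v)$, one obtains
\begin{align*}
M_m.f&=f(s+m,t,v)\,\mu_m,\\
Y_m.f&=f(s+m,t,v)\,\eta_m+m\,\partial_v f(s+m,t,v)\,\mu_m,\\
L_m.f&=f(s+m,t,v)\,\xi_m-3m\,\partial_t f(s+m,t,v)\,\mu_m-m\,\partial_v f(s+m,t,v)\,\eta_m-\tfrac{m^2}{2}\,\partial_v^2 f(s+m,t,v)\,\mu_m,
\end{align*}
with $\xi_0=s$, $\eta_0=v$, $\mu_0=t$. Thus classifying $B$ reduces to determining $\{\xi_m,\eta_m,\mu_m\}_{m\in\mathbb{Z}}$ and matching them with \eqref{dingyi1}--\eqref{dingyi3}. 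Applying each bracket of $\mathfrak{tsv}$ to the generator $1$ and using these formulas converts every relation into a polynomial identity in $s,t,v$; relations carrying a zero index are automatically satisfied (they are exactly what produced the formulas above), so the content sits in the brackets with two nonzero indices.

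\textbf{Determining $\mu_m$.} I would treat $M$ first. From $[M_m,M_n]=0$ in \eqref{00} one gets the cocycle identity $\mu_n(s+m)\mu_m(s)=\mu_m(s+n)\mu_n(s)$, and a comparison of top $s$-degrees forces $\deg_s\mu_m=0$. Next, feeding $\mu_n$ into $[L_m,M_n]=-(3m+n)M_{m+n}$ from \eqref{lm} gives, after dividing by $\mu_n$, an identity of the form $\xi_m(s)-\xi_m(s+n)-3m\,\tfrac{\partial_t\mu_n\,\mu_m}{\mu_n}=-(3m+n)\tfrac{\mu_{m+n}}{\mu_n}$ whose right side is $s$-free; this simultaneously forces $\xi_m$ to be affine in $s$ with a \emph{constant} leading coefficient $\alpha_m$, forces $\mu_m$ to be $v$-independent and $t$-linear, say $\mu_m=c_mt$, and, comparing the coefficient of the free variable $n$, yields the multiplicativity $c_{m+n}=c_mc_n$ and $\alpha_m=c_m$. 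Since $c_mc_{-m}=c_0=1$, no $c_m$ vanishes, whence $c_m=\lambda^m$ for $\lambda:=c_1\in\mathbb{C}^*$ and $\mu_m=\lambda^m t$, matching \eqref{dingyi2}; moreover the leading $s$-coefficient of $\xi_m$ is $\lambda^m$.

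\textbf{Determining $\eta_m$ and $\xi_m$.} With $\mu_m=\lambda^m t$ known, I insert it into $[Y_m,Y_n]=(m-n)M_{m+n}$ from \eqref{yy} and $[L_m,Y_n]=-(m+n)Y_{m+n}$ from \eqref{ly}. A degree count in $s$ gives $\deg_s\eta_m=0$. The key point is that the antisymmetric combination in $[Y_m,Y_n]$ must kill all $v$-terms of degree $\ge 1$ (the right side $(m-n)\mu_{m+n}$ is $v$-free), which both bounds $\deg_v\eta_m$ uniformly in $m$—producing a single $q\in\mathbb{N}$—and forces the leading $v$-coefficients to be proportional to $m$; running the resulting recursion in $m$ (e.g. $n=1$) pins down $\eta_m=\lambda^m\big(m\sum_{i=0}^q\tau_i v^i+v+am^2t+(1-\delta_{m,0})b\big)$, where normalization at $m=0,\pm1$ defines $a,b\in\mathbb{C}$ and $\tau_i\in\mathbb{C}[t]$, matching \eqref{dingyi3}. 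Finally I substitute the now-known $\mu_m,\eta_m$ into $[L_m,L_n]=(m-n)L_{m+n}$ from \eqref{ll}; affineness of $\xi_m$ in $s$ reduces this to a recursion for the $s$-free part of $\xi_m$, solvable up to one free increment $\gamma_m\in\mathbb{C}[t]$ at each $m$, which reproduces \eqref{dingyi1}. The side conditions are then forced by polynomiality: requiring $\xi_m\in\mathbb{C}[s,t,v]$ (no genuine $1/t$ pole in the $\tfrac{m}{t}(\cdots)+\tfrac{\gamma_m}{t}$ terms) gives $\tau_i,\gamma_j\in t\mathbb{C}[t]$, and the normalization $\xi_0=s$ gives $\gamma_0=0$. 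Hence $B\cong\Phi(\lambda,a,b,q,\{\tau_i\},\{\gamma_j\})$, and uniqueness of the parameters is the isomorphism criterion proved in the preceding proposition.

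\textbf{Main obstacle.} The genuine difficulty lies in the last two steps. Once $L,Y,M$ are coupled, the bracket identities are nonlinear in the unknowns—they involve products of $\xi,\eta,\mu$ with the argument shift $s\mapsto s+m$ combined with $\partial_t,\partial_v,\partial_v^2$—so they cannot be handled by linear algebra. The crux is the bookkeeping that must, at once, (i) bound $\deg_v\eta_m$ uniformly in $m$ to extract a single finite $q$, (ii) separate the $s$-, $t$- and $v$-homogeneous components so that the induction on $m$ closes, and (iii) control the $\tfrac{1}{t}$ and $\gamma_m/t$ contributions in $\xi_m$ while enforcing polynomiality. This is precisely the ``computing complexity'' flagged in the introduction and is what makes the argument substantially heavier than in the free $U(\mathbb{C}L_0)$-module situation.
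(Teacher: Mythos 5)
Your reduction step and your overall order of attack (first $\mu_m$, then $\eta_m$, then $\xi_m$, then the isomorphism criterion) follow exactly the paper's skeleton, and your formulas for $M_m.f$, $Y_m.f$, $L_m.f$ agree with $(A1)$--$(A3)$ of Lemma \ref{lemma3.1}. The gaps are in the two places where you claim the bracket identities ``force'' the answer. First, you never invoke $[Y_m,M_n]=0$ from \eqref{00}; in the paper this relation (Lemma \ref{Lemma3}) is what proves that $\mu_m$ is $v$-free and that $\eta_m$ is $s$-free, and your substitutes do not work. Your displayed $[L_m,M_n]$ identity, $\xi_m(s)-\xi_m(s+n)-3m(\partial_t\mu_n)\mu_m/\mu_n=-(3m+n)\mu_{m+n}/\mu_n$, is only valid once one already knows $\partial_v\mu_n=0$: the genuine identity carries the extra terms $-m(\partial_v\mu_n)\,\eta_m-\tfrac{m^2}{2}(\partial_v^2\mu_n)\,\mu_m$, in which the completely undetermined $\eta_m$ multiplies $\partial_v\mu_n$, so deducing $v$-independence of $\mu_m$ from this bracket is circular. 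Likewise the ``degree count in $s$'' for $\eta_m$ in $[Y_m,Y_n]$ breaks down in the critical case where $\deg_s\eta_m=1$ for all $m$: writing $B_m$ for the leading $s$-coefficient, the top-degree terms cancel identically and the $\partial_v$-terms enter at the same degree, leaving only a constraint of the form $(m-n)B_mB_n+m\lambda^m t\,\partial_vB_n-n\lambda^n t\,\partial_vB_m=0$ rather than a contradiction; this is precisely why the paper excludes degree one through $[Y,M]$ (the $h_{m_0}=1$ case inside Lemma \ref{Lemma3}) and not through $[Y,Y]$.

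Second, and more seriously, the assertion that the $[L_m,M_n]$ identity ``simultaneously forces \dots\ $\mu_m$ to be \dots\ $t$-linear'' hides what is, in the paper, the hardest lemma. The identity (the paper's \eqref{eq6}) does not by itself bound $\deg_{M_0}\mu_m$: the leading coefficients on its right-hand side can cancel, and excluding that cancellation (the case $k_{m_0+n_0}<k_{m_0}+k_{n_0}-1$ in the paper's notation) is exactly where Lemma \ref{Lemma5} must bring in the $[L_m,L_n]$ bracket (\eqref{eq12}, \eqref{eq15}) and play the two resulting sign conditions $m_0n_0<0$ and $m_0n_0>0$ against each other; only after that does $\deg_{M_0}\mu_m=1$, hence $\mu_m=\lambda^mM_0$, follow. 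Your proposal offers no argument here. A further omission: you assume every $\eta_m$ has $v$-degree at least $1$ with leading coefficients proportional to $m$, but the degenerate alternative in which some $\eta_{m_0}$ is $v$-free (Case (a) of Lemma \ref{Y1}) is not excluded by $[Y,Y]$ degree counts --- the paper needs $[L_m,Y_n]$ together with $[L_m,L_n]$ to kill it via the contradiction $m_0=\tfrac12\notin\mathbb{Z}$ (Lemma \ref{Lemma6}). Finally, a smaller divergence: the paper determines the $s$-free part of $\xi_m$ from $[L_m,Y_n]$ (setting $n=1$ and integrating in $Y_0$, Lemma \ref{Lemma10}), not from $[L_m,L_n]$; with $[L,L]$ alone the $Y_0$-dependence of $f_{\pm1,0}$ is not visibly pinned down, so you would still need $[L,Y]$ to land on \eqref{dingyi1}. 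The remainder of your outline (the $n=\pm1$ difference equations for the $v$-free part of $\eta_m$, the normalization producing $a,b$, and the divisibility-by-$t$ constraint yielding $\tau_i,\gamma_j\in t\mathbb{C}[t]$) does match the paper's arguments and would go through once these holes are filled.
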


Before proving this theorem we have to present several important lemmas.

Note that $[L_0, Y_0]=0$, $[L_0, M_0]=0$ and $[Y_0, M_0]=0$. Therefore, $U({\mathbb{C}}L_0\oplus{\mathbb{C}}M_0\oplus{\mathbb{C}}Y_0)$ is just the polynomial algebra $\mathbb{C}[L_0, M_0, Y_0]$. Suppose that $B$ is a free  $U({\mathbb{C}}L_0\oplus{\mathbb{C}}M_0\oplus{\mathbb{C}}Y_0)$-module of rank $1$. Then we set $B=\mathbb{C}[L_0, M_0, Y_0]$, and $L_0. 1=L_0$, $Y_0. 1=Y_0$, $M_0. 1=M_0$, where $``."$ is just the module action of $U({\mathbb{C}}L_0\oplus{\mathbb{C}}M_0\oplus{\mathbb{C}}Y_0)$ on $B$. Moreover, we make a convention: for a term $L_0^iM_0^jY_0^k$ in $\mathbb{C}[L_0, M_0, Y_0]$, if $i<0$ or $j<0$ or $k<0$, $L_0^iM_0^jY_0^k=0$.
\begin{lemma}\label{lemma3.1}
Let $B$ be a free  $U({\mathbb{C}}L_0\oplus{\mathbb{C}}M_0\oplus{\mathbb{C}}Y_0)$-module of rank 1 over $\mathfrak{tsv}$. Assume that
\begin{align*}
&L_m. 1=g_m(L_0,M_0,Y_0),\\
&M_m. 1=a_m(L_0,M_0,Y_0),\\
&Y_m. 1=p_m(L_0,M_0,Y_0),
\end{align*}
where $ m\in \mathbb{Z}$, $g_m(L_0,M_0,Y_0)$,
$a_m(L_0,M_0,Y_0)$, $p_m(L_0,M_0,Y_0)\in B$, and $``."$ is the module action of $\mathfrak{tsv}$ on $B$. Then $g_m(L_0,M_0,Y_0)$,  $a_m(L_0,M_0,Y_0)$, $p_m(L_0,M_0,Y_0)$ determine the action of $L_m$, $M_m$ and $Y_m$ on $B$ completely.
\end{lemma}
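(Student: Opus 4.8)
The plan is to exploit the freeness of $B$ to reduce the action of each $L_m, M_m, Y_m$ to its value on the generator $1$, using the straightening formulas \eqref{mli}--\eqref{lyi}. Since $B$ is free of rank $1$ over $\mathbb{C}[L_0, M_0, Y_0]$ with generator $1$, the set $\{L_0^i M_0^j Y_0^k.1 : i, j, k \in \mathbb{Z}_+\}$ is a $\mathbb{C}$-basis of $B$, and $U(\mathbb{C}L_0\oplus\mathbb{C}M_0\oplus\mathbb{C}Y_0)$ acts simply by polynomial multiplication. By $\mathbb{C}$-linearity of the module action it therefore suffices to determine $L_m.(L_0^iM_0^jY_0^k.1)$, $M_m.(L_0^iM_0^jY_0^k.1)$ and $Y_m.(L_0^iM_0^jY_0^k.1)$ for every monomial, and to show that each is an explicit expression in $g_m$, $a_m$, $p_m$.

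First I would rewrite each such expression inside $U(\mathfrak{tsv})$ as $(X_m L_0^i M_0^j Y_0^k).1$ with $X\in\{L,M,Y\}$, and then commute $X_m$ to the far right past the mutually commuting factors $L_0^i$, $M_0^j$, $Y_0^k$ using Proposition \ref{lie}. For $M_m$ the formulas \eqref{mli}, \eqref{mmi}, \eqref{myi} give directly $M_m L_0^i M_0^j Y_0^k = (L_0+m)^i M_0^j Y_0^k M_m$, and for $Y_m$ the formulas \eqref{yli}, \eqref{ymi}, \eqref{yyi} give $Y_m L_0^i M_0^j Y_0^k = (L_0+m)^i M_0^j\big(Y_0^k Y_m + mk Y_0^{k-1} M_m\big)$. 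For $L_m$ one applies \eqref{lli}, then \eqref{lmi}, then \eqref{lyi} together with \eqref{myi}, producing a sum of four terms, each of the form (a polynomial in $L_0, M_0, Y_0$) times a single generator among $L_m, M_m, Y_m$.

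Applying these identities to $1$ and substituting $L_m.1 = g_m$, $M_m.1 = a_m$, $Y_m.1 = p_m$, each monomial action becomes a polynomial combination of $g_m, a_m, p_m$; for instance $M_m.(L_0^iM_0^jY_0^k.1) = (L_0+m)^i M_0^j Y_0^k a_m$ and $Y_m.(L_0^iM_0^jY_0^k.1) = (L_0+m)^i M_0^j Y_0^k p_m + mk (L_0+m)^i M_0^j Y_0^{k-1} a_m$, with the $L_m$ case the analogous four-term formula. Since the coefficients depend only on $m, i, j, k$, the three polynomials $g_m, a_m, p_m$ determine the action of $L_m, M_m, Y_m$ on all of $B$, as claimed.

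The computation is essentially bookkeeping, so there is no serious obstacle; the only point requiring care is to verify that after straightening no product of two index-$m$ generators survives. This is guaranteed because each correction term produced by \eqref{yyi}, \eqref{lmi} or \eqref{lyi} contains exactly one generator $M_m$ or $Y_m$, which then commutes freely past the remaining powers of $L_0, M_0, Y_0$ via \eqref{myi}, \eqref{ymi} and \eqref{mmi}. Hence applying the result to $1$ never requires knowing anything beyond $g_m, a_m, p_m$ themselves.
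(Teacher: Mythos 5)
Your proposal is correct and follows essentially the same route as the paper: both straighten $X_m L_0^i M_0^j Y_0^k$ using the commutation formulas \eqref{mli}--\eqref{lyi} of Proposition \ref{lie}, then evaluate on $1$ to express the action through $g_m$, $a_m$, $p_m$; the paper merely packages your monomial-by-monomial computation into a single general polynomial $u(L_0,M_0,Y_0)$ with the coefficients written as partial derivatives of the shifted $u$. Your closing observation that each correction term contains exactly one index-$m$ generator, so nothing beyond $g_m, a_m, p_m$ is ever needed, is exactly the point implicit in the paper's formulas $(A1)$--$(A3)$.
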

\begin{proof}
Let $u(L_0,M_0,Y_0)=\Sigma_{i,j,k\geq0} a_{i,j,k}L^i_0M^j_0Y^k_0\in B$. Then by using $\eqref{mli}-\eqref{lyi}$, we have
{\small \begin{eqnarray*}
(A1)&& L_m.  u(L_0,M_0,Y_0)=L_m. \Sigma_{i,j,k\geq0} a_{i,j,k}L^i_0M^j_0Y^k_0 \\
&&=\Sigma_{i,j,k\geq0} a_{i,j,k}(m+L_0)^iL_m. M^j_0Y^k_0 \\\
&&=\Sigma_{i,j,k\geq0} a_{i,j,k}(m+L_0)^i(-3mjM^{j-1}_0M_m+M^j_0L_m).Y^k_0\\\
&&=\Sigma_{i,j,k\geq0}
a_{i,j,k}(m+L_0)^i\big(-3mjM^{j-1}_0Y^k_0a_m(L_0,M_0,Y_0)\\
&&-kmM^j_0Y^{k-1}_0p_m(L_0,M_0,Y_0)+M^j_0Y^k_0g_m(L_0,M_0,Y_0)\\
&&-m^2M^j_0\frac{k(k-1)}{2}Y^{k-2}_0a_m(L_0,M_0,Y_0)\big)\\\
&&=-3m(\frac{\partial}{\partial M_0}u(L_0+m,M_0,Y_0))a_m(L_0,M_0,Y_0)+u(L_0+m,M_0,Y_0)g_m(L_0,M_0,Y_0)\\
&&-m(\frac{\partial}{\partial Y_0}u(L_0+m,M_0,Y_0))p_m(L_0,M_0,Y_0)-\frac{m^2}{2}\frac{\partial^2}{\partial Y_0^2}u(L_0+m,M_0,Y_0))a_m(L_0,M_0,Y_0),
\end{eqnarray*}}

{\small\begin{eqnarray*}
(A2) &&M_m. u(L_0,M_0,Y_0)=M_m. \Sigma_{i,j,k\geq0} a_{i,j,k}L^i_0M^j_0Y^k_0\\
&&=\Sigma_{i,j,k\geq0} a_{i,j,k}(m+L_0)^iM_m.M^j_0Y^k_0=\Sigma_{i,j,k\geq0} a_{i,j,k}(m+L_0)^iM^j_0Y^k_0a_m(L_0,M_0,Y_0)\\
&&=u(L_0+m,M_0,Y_0)a_m(L_0,M_0,Y_0),
\end{eqnarray*}}

{\small\begin{eqnarray*}
(A3) &&Y_m\cdot u(L_0,M_0,Y_0)=Y_m\cdot \Sigma_{i,j,k\geq0} a_{i,j,k}L^i_0M^j_0Y^k_0\\
&&=\Sigma_{i,j,k\geq0} a_{i,j,k}(L_0+m)^iM^j_0Y_mY^k_0\\
&&=\Sigma_{i,j,k\geq0} a_{i,j,k}(L_0+m)^iM^j_0(Y_0^kY_m+mkY_0^{k-1}M_m)\\
&&=m\frac{\partial}{\partial Y_0}u(L_0+m,M_0,Y_0)a_m(L_0,M_0,Y_0)+u(L_0+m,M_0,Y_0)p_m(L_0,M_0,Y_0).
\end{eqnarray*}}
Therefore, by $(A1)$-$(A3)$, the module action of $L_m$, $M_m$ and $Y_m$ on $B$ is determined by $g_m(L_0,M_0,Y_0)$,  $a_m(L_0,M_0,Y_0)$, $p_m(L_0,M_0,Y_0)$ completely.
\end{proof}
\begin{lemma}\label{Lemma1}
$a_m(L_0,M_0,Y_0)\neq 0$ for any $m\in \mathbb{Z}$.
\end{lemma}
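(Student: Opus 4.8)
The plan is to argue by contradiction, exploiting the bracket relation that manufactures an $M$-generator. First I would record the base fact that $a_0 = M_0.\,1 = M_0 \neq 0$ in $B = \mathbb{C}[L_0,M_0,Y_0]$, so any hypothetical index $m_0$ with $a_{m_0}=0$ is necessarily nonzero. Suppose then, for contradiction, that $a_{m_0} = 0$ for some $m_0 \in \mathbb{Z}^*$.

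One might first hope to extract information from $[M_m,M_n]=0$, but formula $(A2)$ of Lemma \ref{lemma3.1} shows that $M_m$ acts on $B$ by the shift-and-multiply operator $u \mapsto u(L_0+m,M_0,Y_0)\,a_m$; substituting $a_{m_0}=0$ makes both sides of the induced identity vanish identically, so this relation yields only a tautology. Instead, the key is to invoke \eqref{lm}, namely $[L_m,M_{m_0}] = -(3m+m_0)M_{m+m_0}$. I would apply both sides to the generator $1$ and use $M_{m_0}.\,1 = a_{m_0} = 0$ together with $(A2)$ to rewrite $M_{m_0}.(L_m.\,1) = M_{m_0}.g_m = g_m(L_0+m_0,M_0,Y_0)\,a_{m_0} = 0$; the left-hand side then collapses to $0$, while the right-hand side is $-(3m+m_0)\,a_{m+m_0}$. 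Hence $(3m+m_0)\,a_{m+m_0} = 0$ for every $m \in \mathbb{Z}$.

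To close, I would specialize to an index where the structure constant does not vanish. Taking $m=-m_0$ gives coefficient $-(3(-m_0)+m_0) = 2m_0 \neq 0$ and $m+m_0 = 0$, so the identity forces $2m_0\,a_0 = 0$, i.e. $a_0 = 0$, contradicting $a_0 = M_0 \neq 0$. This establishes the lemma. The only point requiring care — the \emph{obstacle}, such as it is — is precisely this choice of index: the equation $(3m+m_0)\,a_{m+m_0}=0$ is vacuous at the single value $m = -m_0/3$ where $3m+m_0=0$, so one must avoid that value; selecting $m=-m_0$ sidesteps it cleanly since $-2m_0\neq 0$.
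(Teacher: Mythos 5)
Your proof is correct and follows essentially the same route as the paper: both exploit the relation $[L_m,M_{m_0}]=-(3m+m_0)M_{m+m_0}$ applied to $1$, use $(A2)$ to see that $a_{m_0}=0$ forces $M_{m_0}$ to annihilate everything, and then specialize to $m=-m_0$ (the paper's $[L_{-m_0},M_{m_0}].1=2m_0M_0.1$) to contradict $a_0=M_0\neq 0$. The paper simply makes the choice $m=-m_0$ from the outset rather than deriving the general identity $(3m+m_0)a_{m+m_0}=0$ first.
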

\begin{proof}
Note that $M_0. 1=M_0=a_0(L_0,M_0, Y_0)\neq 0$.
If there  exists some $m_0\in \mathbb{Z}^\ast$ such that $a_{m_0}(L_0,M_0,Y_0)=0$, by $(A2)$, we have $M_{m_0}. u(L_0,M_0,Y_0)=0$ for any $u(L_0,M_0,Y_0)\in T$.
Then using
$$
\begin{aligned}
&~~~~\left[L_{-m_0},M_{m_0}\right]. 1=2m_0M_0. 1\\
&=L_{-m_0}M_{m_0}. 1-M_{m_0}L_{-m_0}. 1\\
&=0-M_{m_0}. g_{-m_0}(L_0,M_0,Y_0)=0,\\
\end{aligned}
$$
we have $2m_0M_0. 1=0$. Since $m_0\neq 0$, $M_0.1=0$, which is a contradiction. Therefore, $a_m(L_0,M_0,Y_0)\neq 0$ for any $m\in \mathbb{Z}$.
\end{proof}
\begin{lemma}\label{Lemma2}
$a_m(L_0,M_0,Y_0)\in\mathbb{C}\left[M_0,Y_0\right]\setminus \{0\}$ for any $m\in \mathbb{Z}$.
\end{lemma}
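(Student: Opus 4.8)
The plan is to pull the required $L_0$-independence out of the single commutation relation $[M_m,M_n]=0$ in \eqref{00}, which is the only relation among the $M$'s. Applying the identity $M_mM_n=M_nM_m$ to the generator $1$ and invoking formula $(A2)$ from the proof of Lemma \ref{lemma3.1}, namely $M_m.u(L_0,M_0,Y_0)=u(L_0+m,M_0,Y_0)a_m(L_0,M_0,Y_0)$, I obtain (with $u=a_n$ and $u=a_m$ respectively)
$$a_n(L_0+m,M_0,Y_0)\,a_m(L_0,M_0,Y_0)=a_m(L_0+n,M_0,Y_0)\,a_n(L_0,M_0,Y_0)$$
for all $m,n\in\mathbb{Z}$. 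This single identity is what I will exploit.

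Next I would regard each $a_m$ as a polynomial in $L_0$ with coefficients in the integral domain $\mathbb{C}[M_0,Y_0]$. By Lemma \ref{Lemma1} every $a_m$ is nonzero, so $d_m:=\deg_{L_0}a_m$ is a well-defined nonnegative integer with nonzero leading coefficient $c_m\in\mathbb{C}[M_0,Y_0]$; write $a_m=c_mL_0^{d_m}+e_mL_0^{d_m-1}+\cdots$. Both sides of the displayed identity are polynomials in $L_0$ of degree $d_m+d_n$ with the \emph{same} leading coefficient $c_mc_n$, so the leading terms give no information and one must go one order deeper. Comparing the coefficients of $L_0^{d_m+d_n-1}$ (assuming $d_m+d_n\geq 1$, the case $d_m=d_n=0$ being trivial), the common contributions cancel and only $c_mc_n(d_nm-d_mn)=0$ survives. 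Since $\mathbb{C}[M_0,Y_0]$ is a domain and $c_mc_n\neq 0$, this forces
$$d_n\,m=d_m\,n\qquad\text{for all }m,n\in\mathbb{Z}.$$

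Finally I would specialize $n=1$ to get $d_m=d_1 m$ for every $m$, and then evaluate at $m=-1$, giving $d_{-1}=-d_1$. Both $d_{-1}\geq 0$ and $d_1\geq 0$ are degrees of nonzero polynomials (again by Lemma \ref{Lemma1}), so $d_1=0$, whence $d_m=0$ for all $m$. Thus each $a_m$ has degree $0$ in $L_0$, i.e. $a_m\in\mathbb{C}[M_0,Y_0]$, and combining with Lemma \ref{Lemma1} yields $a_m\in\mathbb{C}[M_0,Y_0]\setminus\{0\}$, as claimed.

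The argument is short, and I expect the only delicate point to be the bookkeeping in the subleading-coefficient comparison: comparing leading coefficients is genuinely useless since they agree automatically, so one must carry the expansions of $a_n(L_0+m)$ and $a_m(L_0+n)$ to one further order and check the cancellation carefully, all the while working in the polynomial ring over the domain $\mathbb{C}[M_0,Y_0]$ so that dividing out the nonzero factor $c_mc_n$ is legitimate. Everything else—including the crucial use of negative $m$ together with nonnegativity of degrees—is routine.
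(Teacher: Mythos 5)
Your proposal is correct and follows essentially the same route as the paper: both apply $[M_m,M_n].1=0$ via $(A2)$, compare the coefficient of $L_0^{\deg_{L_0}a_m+\deg_{L_0}a_n-1}$ to extract the relation $m\,\deg_{L_0}a_n=n\,\deg_{L_0}a_m$, and then use nonnegativity of degrees against sign considerations to force all $L_0$-degrees to vanish. The only cosmetic difference is the endgame—you deduce $d_m=d_1m$ and kill $d_1$ via $d_{-1}\geq 0$, while the paper fixes a purported $m_0$ with $k_{m_0}\geq 1$ and chooses $n$ of opposite sign—but this is the same argument.
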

\begin{proof}
By Lemma \ref{Lemma1}, assume that $a_m(L_0,M_0,Y_0)=\sum\limits_{i=0}^{k_m}b_{m,i}L^i_0 \neq 0$ for any $m\in \mathbb{Z}$, where $b_{m,i}\in \mathbb{C}\left[M_0,Y_0\right],\ k_m\in\mathbb{Z}_+,\ b_{m,k_m}\neq0$. If there exists $m_0\in \mathbb{Z}^\ast$ such that $deg_{L_0}(a_{m_0}(L_0,M_0,Y_0))\geq 1$,
by $(A2)$, we have
{\small
\begin{eqnarray*}
&&\left[M_{m_0},M_n\right]. 1
=M_{m_0}.(M_n. 1)-M_n.(M_{m_0}. 1)\\
&&=M_{m_0}. a_n(L_0,M_0,Y_0)-M_n. a_{m_0}(L_0,M_0,Y_0)\\
&&=\sum\limits_{i=0}^{k_n}b_{n,i}(L_0+m_0)^i\sum\limits_{j=0}^{k_{m_0}}b_{m_0,j}L^j_0-\sum\limits_{i=0}^{k_{m_0}}b_{m_0,i}(L_0+n)^i\sum\limits_{j=0}^{k_n}b_{n,j}L^j_0\\
&&=b_{n,k_n}b_{m_0,k_{m_0}}(m_0k_n-nk_{m_0})L^{k_{m_0}+k_n-1}_0\qquad\left(mod\sum\limits_{i=0}^{k_n+k_{m_0}-2}\mathbb{C}\left[Y_0,M_0\right]L^i_0\right),
\end{eqnarray*}}
for any $n\in \mathbb{Z}$. Since $\left[M_{m_0},M_n\right]. 1=0$, $(m_0k_n-nk_{m_0})=0$ for any $n\in \mathbb{Z}$.
Suppose that $m_0>0$. Let $n<0$. Since $k_{m_0}>0$ and $k_n\geq 0$, $m_0k_n-nk_{m_0}> 0$. We get a contradiction. Similarly, if $m_0<0$, letting $n>0$, we get $m_0k_n-nk_{m_0}<0$, a contradiction. Therefore, $a_m(L_0,M_0,Y_0)\in\mathbb{C}\left[M_0,Y_0\right]\setminus \{0\}$ for any $m\in \mathbb{Z}$.
\end{proof}
\begin{lemma}\label{Lemma3}
$a_m(L_0,M_0,Y_0)\in\mathbb{C}\left[M_0\right]\setminus \{0\}$, $\ p_m(L_0,M_0,Y_0)\in\mathbb{C}\left[M_0,Y_0\right]$ for any $m\in \mathbb{Z}$.
\end{lemma}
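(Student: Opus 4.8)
The plan is to extract everything from the single bracket relation $[Y_m,M_n]=0$ evaluated on the generator $1$, together with the structural facts already available: $a_m\neq 0$ (Lemma \ref{Lemma1}), $a_m\in\mathbb{C}[M_0,Y_0]$ (Lemma \ref{Lemma2}), and the action formulas $(A2)$, $(A3)$ from the proof of Lemma \ref{lemma3.1}. First I would substitute $u=a_n$ into $(A3)$ and $u=p_m$ into $(A2)$. Because $a_m$ and $a_n$ carry no $L_0$ by Lemma \ref{Lemma2}, the identity $Y_m.(M_n.1)=M_n.(Y_m.1)$ collapses to the clean relation
\[
a_n\big(p_m(L_0,M_0,Y_0)-p_m(L_0+n,M_0,Y_0)\big)=-m\,\frac{\partial a_n}{\partial Y_0}\,a_m\qquad(\ast)
\]
valid for all $m,n\in\mathbb{Z}$. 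The whole lemma should then be read off from $(\ast)$.

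Next I would compare $L_0$-degrees in $(\ast)$. The right-hand side is free of $L_0$, whereas for $n\neq 0$ the difference $p_m(L_0)-p_m(L_0+n)$ has $L_0$-degree $\deg_{L_0}(p_m)-1$ when $p_m$ is non-constant in $L_0$; since $a_n\neq 0$ sits in the integral domain $\mathbb{C}[L_0,M_0,Y_0]$, a degree-$\geq 2$ term on the left could not be cancelled, so $\deg_{L_0}(p_m)\leq 1$ unconditionally. Then I would aim for $a_m\in\mathbb{C}[M_0]$, i.e. $\partial a_m/\partial Y_0=0$. Suppose not: some $a_{n_0}$ (necessarily $n_0\neq 0$, since $a_0=M_0$) genuinely involves $Y_0$. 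Then the right side of $(\ast)$ is nonzero for every $m\neq 0$, forcing $p_m$ to depend on $L_0$ and hence $\deg_{L_0}(p_m)=1$; writing $p_m=c_mL_0+r_m$ with $c_m\in\mathbb{C}[M_0,Y_0]\setminus\{0\}$, one has $p_m(L_0)-p_m(L_0+n)=-nc_m$, and $(\ast)$ becomes $n\,c_m\,a_n=m\,(\partial a_n/\partial Y_0)\,a_m$ for all $m,n\neq 0$.

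The crux is turning this coupled family into a contradiction. Specializing at the fixed index $n_0$ shows $c_m/(m a_m)$ is independent of $m$; calling this common rational function $h$, one gets $\partial a_n/\partial Y_0=n\,h\,a_n$ for every $n\neq 0$. Taking $n=1$ and $n=-1$ yields $\partial(a_1a_{-1})/\partial Y_0=h\,a_1a_{-1}-h\,a_1a_{-1}=0$, so $a_1a_{-1}$ is free of $Y_0$; since $Y_0$-degrees add in $\mathbb{C}[M_0][Y_0]$ and $a_1,a_{-1}\neq 0$, both $a_1$ and $a_{-1}$ are free of $Y_0$, whence $h\,a_1=\partial a_1/\partial Y_0=0$ forces $h=0$ and therefore $\partial a_n/\partial Y_0=0$ for all $n$ — contradicting the choice of $n_0$. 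This gives $a_m\in\mathbb{C}[M_0]\setminus\{0\}$. Finally, feeding $\partial a_n/\partial Y_0=0$ back into $(\ast)$ gives $a_n\big(p_m(L_0)-p_m(L_0+n)\big)=0$; as $a_n\neq 0$ in the domain, $p_m(L_0)=p_m(L_0+n)$ for all $n\neq 0$, which is possible only if $p_m$ is constant in $L_0$, i.e. $p_m\in\mathbb{C}[M_0,Y_0]$. I expect the genuine obstacle to be exactly this middle step — excluding $Y_0$ from $a_m$ — since the $L_0$-degree bound alone leaves open the borderline case $\deg_{L_0}(p_m)=1$, and disentangling the coupled identities seems to need the $n=\pm1$ symmetrization trick rather than a plain degree count.
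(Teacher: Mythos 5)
Your proposal is correct and is essentially the paper's own argument: your relation $(\ast)$ is exactly the paper's equation \eqref{[y,m]}, your bound $\deg_{L_0}(p_m)\leq 1$ is obtained by the same $L_0$-degree comparison, and your $n=\pm 1$ symmetrization giving $\frac{\partial}{\partial Y_0}(a_1a_{-1})=0$ is the same trick the paper runs at $n=\pm m_0$, where its displayed identity $\sum_{i,j}c_{-m_0,i}c_{m_0,j}(i+j)Y_0^{i+j-1}=0$ is precisely $\frac{\partial}{\partial Y_0}\bigl(a_{m_0}a_{-m_0}\bigr)=0$ followed by the same degree-additivity (leading-coefficient) contradiction. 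The only difference is organizational: the paper negates the $p$-claim first (assuming $\deg_{L_0}(p_{m_0})=1$ and solving $d_{m_0,1}=\frac{\partial a_{m_0}}{\partial Y_0}$ at $n=m_0$) and deduces the $a$-claim afterwards, whereas you negate the $a$-claim first and package the coupled relations into the rational function $h$; the underlying computations are identical.
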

\begin{proof}
By Lemma \ref{Lemma2}, we assume that $a_m(L_0,M_0,Y_0)=\sum\limits_{i=0}^{l_m}c_{m,i}Y^i_0\neq 0$, where $c_{m,i}\in \mathbb{C}\left[M_0\right]$ for any $m\in \mathbb{Z}$ and $c_{m, l_m}\neq 0$.
Moreover, assume that $p_m(L_0,M_0,Y_0)$ $=\sum\limits_{i=0}^{h_m}d_{m,i}L^i_0$,
where $d_{m,i}\in \mathbb{C}\left[M_0,Y_0\right]$ for any $m\in \mathbb{Z}$.

By $(A2)$ and $(A3)$, we have
\begin{eqnarray}\label{[y,m]}
0&=&\quad\left[Y_m,M_n\right]. 1\\
&=&Y_m.(M_n. 1)-M_n. (Y_m. 1)\nonumber\\
&=&Y_m.\sum\limits_{i=0}^{l_n}c_{n,i}Y^i_0-M_n. \sum\limits_{i=0}^{h_m}d_{m,i}L^i_0 \nonumber\\
&=&\sum\limits_{i=0}^{l_n}\sum\limits_{j=0}^{h_m}c_{n,i}d_{m,j}Y^i_0L^j_0-\sum\limits_{i=0}^{l_n}\sum\limits_{j=0}^{h_m}c_{n,i}d_{m,j}(L_0+n)^jY^i_0\nonumber\\
&&+m\sum\limits_{i=0}^{l_n}\sum\limits_{j=0}^{l_m}ic_{n,i}c_{m,j}Y^{i+j-1}_0.\nonumber
\end{eqnarray}

If $\sum\limits_{i=0}^{l_n}\sum\limits_{j=0}^{h_m}c_{n,i}d_{m,j}Y^i_0L^j_0-\sum\limits_{i=0}^{l_n}\sum\limits_{j=0}^{h_m}c_{n,i}d_{m,j}(L_0+n)^jY^i_0 =0$, $deg_{L_0}(p_m(L_0, M_0, Y_0))=h_m=0$ or $p_m(L_0, M_0, Y_0)=0$, which mean that $p_m(L_0, M_0, Y_0)\in \mathbb{C}[M_0, Y_0]$.

If $\sum\limits_{i=0}^{l_n}\sum\limits_{j=0}^{h_m}c_{n,i}d_{m,j}Y^i_0L^j_0-\sum\limits_{i=0}^{l_n}\sum\limits_{j=0}^{h_m}c_{n,i}d_{m,j}(L_0+n)^jY^i_0 \neq 0$, we have
\begin{align*}
deg_{L_0}\Big(\sum\limits_{i=0}^{l_n}\sum\limits_{j=0}^{h_m}c_{n,i}d_{m,j}Y^i_0L^j_0-\sum\limits_{i=0}^{l_n}\sum\limits_{j=0}^{h_m}c_{n,i}d_{m,j}(L_0+n)^jY^i_0\Big)=0.
\end{align*}
Thus we get $deg_{L_0}(p_m(L_0, M_0, Y_0)=h_m\leq1$.

Suppose that there exists $m_0\in \mathbb{Z}^\ast$ such that $h_{m_0}=1$. Then
\begin{eqnarray}\label{eq2}
\eqref{[y,m]}=m_0\sum\limits_{i=0}^{l_n}\sum\limits_{j=0}^{l_{m_0}}ic_{n,i}c_{m_0,j}Y^{i+j-1}_0-n\sum\limits_{i=0}^{l_n}c_{n,i}d_{m_0,1}Y^i_0.
\end{eqnarray}

Letting $n=m_0$ in (\ref{eq2}),
we have
\begin{eqnarray*}
0&=&m_0(\sum\limits_{i=0}^{l_{m_0}}\sum\limits_{j=0}^{l_{m_0}}ic_{m_0,i}c_{m_0,j}Y^{i+j-1}_0-\sum\limits_{i=0}^{l_{m_0}}c_{m_0,i}d_{m_0,1}Y^i_0)\\
&=& m_0(\sum\limits_{j=0}^{l_{m_0}}\sum\limits_{i=0}^{l_{m_0}}jc_{m_0,j}c_{m_0,i}Y^{i+j-1}_0-\sum\limits_{i=0}^{l_{m_0}}c_{m_0,i}d_{m_0,1}Y^i_0)\\
&=& m_0 (\sum\limits_{i=0}^{l_{m_0}}c_{m_0,i}Y_0^i)\cdot(\sum\limits_{j=0}^{l_{m_0}}jc_{m_0,j}Y_0^{j-1}-d_{m_0,1}).
\end{eqnarray*}
Since $m_0\neq 0$ and $\sum\limits_{i=0}^{l_{m_0}}c_{m_0,i}Y_0^i\neq 0$, we get $d_{m_0,1}=\sum\limits_{j=0}^{l_{m_0}}jc_{m_0,j}Y_0^{j-1}$.

Letting $n=-m_0$ in (\ref{eq2}),
\begin{eqnarray*}
0&=& m_0\sum\limits_{i=0}^{l_{-m_0}}c_{-m_0,i}\Big(\sum\limits_{j=0}^{l_{m_0}}ic_{m_0,j}Y^{j}_0+d_{m_0,1}Y_0\Big)Y_0^{i-1},\\
&=&m_0\sum\limits_{i=0}^{l_{-m_0}}c_{-m_0,i}\Big(\sum\limits_{j=0}^{l_{m_0}}c_{m_0,j}(i+j)Y^{j}_0\Big)Y_0^{i-1}\\
&=&m_0\sum\limits_{i=0}^{l_{-m_0}}\sum\limits_{j=0}^{l_{m_0}}c_{-m_0,i}c_{m_0,j}(i+j)Y_0^{i+j-1}.
\end{eqnarray*}
If $l_{m_0}>0$, by comparing the coefficient of $Y_0^{l_{-m_0}+l_{m_0}-1}$, we get $c_{-m_0,l_{-m_0}}c_{m_0,l_{m_0}}(l_{m_0}+l_{-m_0})=0$, which is a contradiction, due to that $c_{-m_0,l_{-m_0}}$, $c_{m_0,l_{m_0}}$ and $l_{m_0}+l_{-m_0}$ are not equal to zero. Therefore, $l_{m_0}=0$, i.e. $d_{m_0,1}=0$, a contradiction.
Therefore, $deg_{L_0}(p_m(L_0, M_0, Y_0))=h_m=0$ or $p_m(L_0, M_0, Y_0)=0$ for any $m\in \mathbb{Z}$, i.e. $p_m(L_0,M_0,Y_0)\in\mathbb{C}\left[M_0,Y_0\right]$.
Consequently, by \eqref{[y,m]},
we have
\begin{align*}
m\Big(\frac{\partial}{\partial Y_0}a_n(L_0, M_0,Y_0)\Big)a_m(L_0,M_0,Y_0)=0.
\end{align*}
Therefore $l_n=0$ for any $n\in\mathbb{Z}$ by Lemma \ref{Lemma1}.
Then
$a_m(L_0,M_0,Y_0)\in\mathbb{C}\left[M_0\right]\setminus \{0\}.$
\end{proof}

By Lemma \ref{Lemma3}, for convenience, we write $a_m(L_0, M_0, Y_0)$ as $a_m(M_0)$ and $p_m(L_0, $ $M_0,Y_0)$ as $p_m(M_0, Y_0)$.
\begin{lemma}\label{Lemma4} $g_m(L_0,M_0,Y_0)=f_{m,1}L_0+f_{m,0}$ with $f_{m,1}\in\mathbb{C}\left[M_0\right]\setminus \{0\}$ and $f_{m, 0}\in \mathbb{C}\left[M_0, Y_0\right]$ for any $m\in \mathbb{Z}$.
\end{lemma}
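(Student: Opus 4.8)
The plan is to determine the structure of $g_m(L_0,M_0,Y_0)$ by exploiting the commutation relations that involve $L_m$, now that Lemmas \ref{Lemma1}--\ref{Lemma3} have already pinned down $a_m \in \mathbb{C}[M_0]\setminus\{0\}$ and $p_m \in \mathbb{C}[M_0,Y_0]$. The natural relation to use is $[L_m, L_n] = (m-n)L_{m+n}$, which I would compute on the generator $1$ using the action formula $(A1)$ from Lemma \ref{lemma3.1}. Writing $g_m = \sum_{i=0}^{r_m} e_{m,i}L_0^i$ with $e_{m,i}\in\mathbb{C}[M_0,Y_0]$ and $e_{m,r_m}\neq 0$, the key observation is that in $(A1)$ the operator $L_m$ acts on a polynomial $u$ essentially by the substitution $L_0 \mapsto L_0 + m$ followed by multiplication by $g_m$ (together with lower-order correction terms coming from the $M_0$- and $Y_0$-derivatives, which by Lemmas \ref{Lemma2}--\ref{Lemma3} do not raise the $L_0$-degree). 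Thus the top-degree-in-$L_0$ behavior of $L_m.g_n(L_0,M_0,Y_0)$ is governed by $g_n(L_0+m,M_0,Y_0)\,g_m(L_0,M_0,Y_0)$.

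\textbf{Main steps.} First I would expand $[L_m,L_n].1 = L_m.g_n - L_n.g_m$ via $(A1)$ and track the degree in $L_0$. The dominant contribution is
\begin{align*}
g_n(L_0+m,M_0,Y_0)\,g_m(L_0,M_0,Y_0)-g_m(L_0+n,M_0,Y_0)\,g_n(L_0,M_0,Y_0),
\end{align*}
whose leading term in $L_0$ has degree $r_m+r_n$ with coefficient $e_{m,r_m}e_{n,r_n}$ times a factor that encodes the difference of the shifts. Setting this equal to $(m-n)g_{m+n}(L_0,M_0,Y_0)$, which has $L_0$-degree $r_{m+n}$, I would compare top degrees. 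A degree count of the form $r_m + r_n - 1 = r_{m+n}$ (the shift produces a cancellation of the genuine leading term, dropping the degree by one) forces the $r_m$ to behave additively-minus-a-constant; solving the resulting arithmetic relation $r_{m+n}=r_m+r_n-1$ over all $m,n\in\mathbb{Z}$, together with the base value $r_0 = 1$ coming from $g_0 = L_0$, yields $r_m = 1$ for all $m$. This gives $g_m = f_{m,1}L_0 + f_{m,0}$ with $f_{m,1},f_{m,0}\in\mathbb{C}[M_0,Y_0]$.

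To finish I would show $f_{m,1}\in\mathbb{C}[M_0]\setminus\{0\}$. Nonvanishing follows because $g_0 = L_0$ has leading coefficient $1$, and a relation such as $[L_{-m},L_m].1 = 2mL_0.1 = 2mL_0$ forces $f_{m,1}$ to be nonzero (if some $f_{m_0,1}=0$ the $L_0$-degree on the left would drop below $1$, contradicting the right-hand side). That $f_{m,1}$ is free of $Y_0$ should come from examining the relation $[Y_n, L_m].1 = (m+n)Y_{m+n}.1$ via $(A1)$ and $(A3)$: since $p_{m+n}\in\mathbb{C}[M_0,Y_0]$ carries no $L_0$, comparing the coefficient of $L_0$ on both sides constrains the $Y_0$-dependence of $f_{m,1}$, and a differentiation/degree argument in $Y_0$ analogous to the end of Lemma \ref{Lemma3} eliminates it.

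\textbf{The main obstacle} will be bookkeeping the lower-order-in-$L_0$ contributions in $(A1)$ — the $-3m(\partial_{M_0}u)a_m$, $-m(\partial_{Y_0}u)p_m$, and $-\tfrac{m^2}{2}(\partial_{Y_0}^2 u)a_m$ terms — carefully enough to be sure they cannot interfere with the top-degree comparison, so that the degree recursion $r_{m+n}=r_m+r_n-1$ is genuinely forced rather than merely suggested. The safeguard is that all three correction terms involve $a_m$ or $p_m$, which lie in $\mathbb{C}[M_0]$ or $\mathbb{C}[M_0,Y_0]$ and hence contribute nothing to the $L_0$-degree beyond what $u(L_0+m,\cdots)$ already carries; isolating the highest power of $L_0$ is therefore clean, and the argument reduces to the elementary functional equation for $\{r_m\}$.
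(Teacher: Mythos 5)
Your route is genuinely different from the paper's: you try to extract $\deg_{L_0}(g_m)$ from $[L_m,L_n]=(m-n)L_{m+n}$, whereas the paper gets everything from $[L_m,M_n]=-(3m+n)M_{m+n}$ (its \eqref{eq4}--\eqref{eq6}). Unfortunately your version has a genuine gap at its central step. Writing $g_m=\sum_{i=0}^{r_m}e_{m,i}L_0^i$, the two quadratic terms $g_n(L_0+m)g_m(L_0)-g_m(L_0+n)g_n(L_0)$ do cancel in degree $r_m+r_n$, but the coefficient they leave in degree $r_m+r_n-1$ is $e_{m,r_m}e_{n,r_n}(mr_n-nr_m)$. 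So the recursion $r_{m+n}=r_m+r_n-1$ is forced only when $mr_n\neq nr_m$; it gives no information on the locus $mr_n=nr_m$, e.g.\ for a hypothetical degree pattern with $r_m$ proportional to $m$ for all positive $m$, every comparison between positive indices degenerates. (The paper runs into exactly this vanishing phenomenon in the proofs of Lemmas \ref{Lemma2} and \ref{Lemma5}, where coefficients of the form $m_0k_n-nk_{m_0}$ appear, and it needs sign arguments plus a second Lie relation to dispose of them; your sketch does not address it.) Moreover, your ``safeguard'' concerning the correction terms fails in the critical regime: $-3n(\partial_{M_0}g_m)a_n$, $-n(\partial_{Y_0}g_m)p_n$ and $-\tfrac{n^2}{2}(\partial^2_{Y_0}g_m)a_n$ have $L_0$-degree up to $r_m$, and $r_m=r_m+r_n-1$ precisely when $r_n=1$ --- which cannot be avoided when the goal is to prove that every $r_n$ equals $1$. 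The same cancellation undermines your nonvanishing argument: if $f_{m_0,1}=0$, the degree-one parts of the two quadratic terms in $[L_{-m_0},L_{m_0}].1$ are both $f_{-m_0,1}g_{m_0}$ and cancel identically, so what survives in degree one consists of correction terms, and no contradiction with $2m_0L_0$ follows without substantial further work.

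The paper avoids all of this because its chosen bracket is linear in the unknown $g_m$: since $M_n.1=a_n(M_0)$ is $L_0$-free (Lemmas \ref{Lemma2}, \ref{Lemma3}) and $M_n$ acts by pure shift-and-multiply ($(A2)$), relation \eqref{eq4} reads $a_n(M_0)\bigl(g_m(L_0,M_0,Y_0)-g_m(L_0+n,M_0,Y_0)\bigr)$ equals an $L_0$-free polynomial, and for $n\neq 0$ the left-hand side has $L_0$-degree exactly $\deg_{L_0}(g_m)-1$ whenever $\deg_{L_0}(g_m)\geq 1$; this gives $\deg_{L_0}(g_m)\leq 1$ in one line, the degenerate case $g_{m_0}=0$ or $\deg_{L_0}(g_{m_0})=0$ is excluded by an $M_0$-degree comparison in \eqref{eq5} with $n=-4m_0$, and the $Y_0$-degree comparison in \eqref{eq6} yields $f_{m,1}\in\mathbb{C}[M_0]$. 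Your final step (comparing the coefficient of $L_0$ in $[L_m,Y_n].1$ to kill the $Y_0$-dependence of $f_{m,1}$) is sound once linearity in $L_0$ is known, but the core claim $\deg_{L_0}(g_m)=1$ is not established by your argument as it stands; to keep the $[L,L]$ route you would have to treat the cases $mr_n=nr_m$ and the low-degree corrections explicitly, and the simplest repair is to switch to the paper's relation $[L_m,M_n]$.
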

\begin{proof}
For any $m$, $n\in \mathbb{Z}$, by $(A2)$ and $(A3)$, we get
{\small \begin{eqnarray}
&&\label{eq4}\left[L_m,M_n\right]. 1=(-3m-n)M_{m+n}. 1\\
&=&a_n(M_0)g_m(L_0,M_0,Y_0)-3m \frac{\partial a_n(M_0)}{\partial M_0}a_m(M_0)-g_m(L_0+m, M_0, Y_0) a_n(M_0)\nonumber\\
&=&(-3m-n)a_{m+n}(M_0).\nonumber
\end{eqnarray}}

If there exists $m_0\in \mathbb{Z}^\ast$ such that $g_{m_0}(L_0,M_0,Y_0)=0$ or $deg_{L_0}(g_{m_0}(L_0, M_0, Y_0))=0$. Then by (\ref{eq4}), we get
\begin{eqnarray}\label{eq5}
(-3m_0-n)a_{m_0+n}(M_0)=-3m_0\frac{\partial a_n(M_0)}{\partial M_0} a_{m_0}(M_0).
\end{eqnarray}
For any $n\in \mathbb{Z}$ with $n\neq -3m_0$, we get $deg_{M_0}(a_n(M_0))\geq 1$ due to that $a_{m_0+n}(M_0)\neq 0$. When $n=-3m_0$, by (\ref{eq5}), we have $deg_{M_0}(a_{-3m_0}(M_0))=0$.
Let $n=-4m_0$ and compare the degree of $M_0$ in (\ref{eq5}). The degree of $M_0$ in the left side of (\ref{eq5}) is $0$ and  the degree of $M_0$ in the right side of (\ref{eq5}) is greater than or equal to $1$. We get a contradiction. Then we can assume that $g_m(L_0,M_0,Y_0)=\sum\limits_{i=0}^{t_m}f_{m,i}L^i_0$, where $m\in \mathbb{Z}$,
$f_{m,i}\in\mathbb{C}\left[M_0,Y_0\right]$, $f_{m,t_m}\neq 0$ and $t_m\geq 1$.

Since $deg_{L_0}(a_{m+n}(M_0))=0$, by (\ref{eq4}),
we have
\begin{align*}
deg_{L_0}(a_n(M_0)\sum\limits_{i=0}^{t_m}f_{m,i}L^i_0-\sum\limits_{j=0}^{t_m}f_{m,j} (L_0+n)^ja_n(M_0))=0.
\end{align*}
Then we get that $t_m=1$ for any $m\in \mathbb{Z}$.

Set $g_m(L_0,M_0,Y_0)=f_{m,1}L_0+f_{m,0}$ with $f_{m,i}\in\mathbb{C}\left[M_0,Y_0\right]$ for $i\in \{0,1\}$ and $f_{m,1}\neq 0$. Taking them into (\ref{eq4}), we have
\begin{eqnarray}
\label{eq6}&(-3m-n)a_{m+n}(M_0)=-3m\frac{\partial a_m(M_0)}{\partial M_0}-nf_{m,1}a_n(M_0).
\end{eqnarray}
Comparing the degree of $Y_0$ in (\ref{eq6}), we get $f_{m,1}\in \mathbb{C}[M_0]\setminus\{0\}$.
\end{proof}
\begin{lemma}\label{Lemma5}
$a_m(M_0)=\lambda^mM_0$ and $g_m(L_0, M_0, Y_0)=\lambda^mL_0+f_{m,0}$ for any $m\in \mathbb{Z}$ and some $\lambda \in \mathbb{C}^\ast$, $f_{m,0}\in \mathbb{C}[M_0, Y_0]$.
\end{lemma}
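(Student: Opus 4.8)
The plan is to reduce everything to two polynomial identities in $M_0$ coming from the bracket $[L_m,M_n]=(-3m-n)M_{m+n}$, and then to run a degree analysis that pins down the $M_0$-degrees of $a_m$ and $f_{m,1}$. First I would feed the forms supplied by Lemmas \ref{Lemma3} and \ref{Lemma4} (so $a_m=a_m(M_0)$, $p_m\in\mathbb{C}[M_0,Y_0]$, and $g_m=f_{m,1}L_0+f_{m,0}$ with $f_{m,1}\in\mathbb{C}[M_0]\setminus\{0\}$) into $(A1)$–$(A3)$ and compute $[L_m,M_n].1$. Since $a_n$ is free of $L_0$ and $Y_0$, all $Y_0$-derivatives drop out and the action of $M_n$ only shifts $L_0\mapsto L_0+n$; because $g_m$ is affine in $L_0$, that shift contributes merely $-nf_{m,1}$. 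This yields the exact identity, call it (I),
\[
(3m+n)\,a_{m+n}=3m\,\frac{\partial a_n}{\partial M_0}\,a_m+n\,f_{m,1}\,a_n,\qquad m,n\in\mathbb{Z},
\]
which is the corrected form of \eqref{eq6}; interchanging $m$ and $n$ gives the companion identity (I$'$). Notably this single family suffices: I will not need the $[L_m,L_n]$ or $[Y_m,Y_n]$ relations.

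Next comes the structural input. Putting $n=-3m$ in (I) kills the left side and gives, for $m\neq0$, the relation (IV): $\frac{\partial a_{-3m}}{\partial M_0}\,a_m=f_{m,1}\,a_{-3m}$. Writing $d_k=\deg_{M_0}a_k$ and $e_m=\deg_{M_0}f_{m,1}$, I first note that $d_{-3m}=0$ would make the left side vanish while the right side does not (as $f_{m,1}\neq0$, $a_{-3m}\neq0$), so $d_{-3m}\geq1$; comparing top terms in (IV) then yields $e_m=d_m-1$, hence $d_m\geq1$ for every $m$, together with the leading-coefficient relation $\phi_m=d_{-3m}\alpha_m$, where $\alpha_k,\phi_m$ are the leading coefficients of $a_k,f_{m,1}$. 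The decisive point is that in (I) both right-hand terms have degree exactly $d_m+d_n-1$, with combined leading coefficient $\alpha_m\alpha_n(3m\,d_n+n\,d_{-3m})$; for $m,n>0$ this is strictly positive, so no cancellation occurs and $d_{m+n}=d_m+d_n-1$ exactly. Thus $D_k:=d_k-1$ is additive on the positive cone (and, by the same sign argument, on the negative cone), forcing $d_k=1+kD_1$ for $k>0$ and $d_k=1-kD_{-1}$ for $k<0$. Dividing the leading-coefficient equation of (I) by that of (I$'$) eliminates $\alpha_{m+n}$ and, after substituting $\phi_m=d_{-3m}\alpha_m$, gives the purely numerical identity $(3m+n)(3n\,d_m+m\,d_{-3n})=(3n+m)(3m\,d_n+n\,d_{-3m})$; plugging in the parametrization for $m,n>0$, $m\neq n$, collapses it to $6mn(m-n)(D_1+D_{-1})=0$, whence $D_1+D_{-1}=0$ and therefore $d_m\equiv1$, $e_m\equiv0$: every $a_m$ is linear and every $f_{m,1}$ is a nonzero constant.

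The endgame is then routine. With $d_m=1$, relation (IV) forces $f_{m,1}$ to equal the leading coefficient $c_m$ of $a_m=c_mM_0+c_m'$. Comparing $M_0$-coefficients in (I) gives $c_{m+n}=c_mc_n$ whenever $3m+n\neq0$; taking $n=1$ (so $3m+1\neq0$ always) and inducting in both directions yields $c_m=\lambda^m$ with $\lambda:=c_1\in\mathbb{C}^*$, and in particular $f_{m,1}=\lambda^m$. Comparing constant terms in (I) and setting $r_m:=c_m'/\lambda^m$ produces $(3m+n)r_{m+n}=3m\,r_m+n\,r_n$ with $r_0=0$; a forward induction ($n=1$) shows $r_m=r_1$ for $m\geq1$, a backward induction ($n=-1$) shows $r_m=r_{-1}$ for $m\leq-1$, while $(m,n)=(1,-1)$ gives $r_{-1}=3r_1$ and the case $n=-3m$ gives $r_1=r_{-3}=r_{-1}$, forcing $r_1=0$ and hence $c_m'\equiv0$. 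Therefore $a_m=\lambda^mM_0$ and $g_m=\lambda^mL_0+f_{m,0}$, as claimed.

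The main obstacle is the middle step: controlling $\deg_{M_0}a_m$. Identity (I) alone yields only the subadditive bound $d_{m+n}\le d_m+d_n-1$, which does not by itself bound the degrees. What makes the argument close is the observation that for indices of the same sign the factor $3m\,d_n+n\,d_{-3m}$ cannot vanish, upgrading the inequality to an equality and thereby turning $D_k$ into an additive, hence linear, function whose nonnegativity on both half-lines forces it to vanish identically.
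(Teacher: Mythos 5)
Your proof is correct, but it closes the key degree-control step by a genuinely different route than the paper. Both arguments start identically: the bracket $[L_m,M_n].1$ yields your identity (I) (you correctly repaired the typos in \eqref{eq4}--\eqref{eq6}, where the shift must be by $n$ and the first term must carry the factor $a_m$), the substitution $n=-3m$ yields \eqref{eq7}, and from there both extract $\deg_{M_0}(f_{m,1})=\deg_{M_0}(a_m)-1$ and the leading-coefficient relation $d_{m,k_m-1}=k_{-3m}c_{m,k_m}$. The divergence is in proving that no degree drop can occur: the paper assumes a drop at some pair $(m_0,n_0)$, shows via \eqref{eq10}, \eqref{eq11} and \eqref{eq14} that this forces $m_0n_0<0$, and then must invoke the \emph{second} bracket family $[L_m,L_n]$ (the $L_0$-coefficient identity \eqref{eq12} and the comparison \eqref{eq15}) to force $m_0n_0>0$, a contradiction. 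You never touch $[L_m,L_n]$: instead you restrict to same-sign pairs, where the combined leading coefficient $\alpha_m\alpha_n(3m\,d_n+n\,d_{-3m})$ of the right side of (I) is a sum of terms of one sign and hence cannot vanish, so degree additivity $d_{m+n}=d_m+d_n-1$ holds on each cone; then the $m\leftrightarrow n$ swap symmetry of (I) eliminates $\alpha_{m+n}$ and produces the numerical identity collapsing to $6mn(m-n)(D_1+D_{-1})=0$, forcing all $d_m=1$. This makes your argument entirely self-contained within the single relation $[L_m,M_n]=(-3m-n)M_{m+n}$, which is a real economy; the price is the extra bookkeeping with the linear parametrization $d_k=1\pm kD_{\pm1}$, whereas the paper gets its contradiction from two crude sign statements. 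Your endgame is also tighter than the paper's: the paper's step ``if $n\neq-3m$ then $c_{m+n,0}=c_{n,1}c_{m,0}$'' is not actually immediate from the constant-term equation, while your solution of $(3m+n)r_{m+n}=3mr_m+nr_n$ (via $r_{-1}=3r_1$, $r_m=r_{-3m}$, and the two inductions) is a clean and complete justification of the same conclusion $c_{m,0}\equiv 0$.
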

\begin{proof}
By Lemma \ref{Lemma4}, we assume that $g_m(L_0,M_0,Y_0)=f_{m,1}L_0+f_{m,0}$, where $f_{m,1}\in\mathbb{C}\left[M_0\right]\backslash \{0\}$ and $f_{m,0}\in\mathbb{C}\left[M_0, Y_0\right]$.

Let $n=-3m$ for any $m\neq 0$ in (\ref{eq6}). We get
\begin{eqnarray}\label{eq7}
 \frac{\partial a_{-3m}(M_0)}{\partial M_0}a_m(M_0)=f_{m,1}a_{-3m}(M_0).
\end{eqnarray}
Since $f_{m,1}$ and $a_{-3m}(M_0)$ are not equal to zero, $\frac{\partial a_{-3m}(M_0)}{\partial M_0}\neq 0$. Since $deg_{M_0}(a_{-3m}(M_0))=deg_{M_0}( \frac{\partial a_{-3m}(M_0)}{\partial M_0})+1$, we obtain
\begin{eqnarray*}
deg_{M_0}(a_m(M_0))=deg_{M_0}(f_{m,1})+1,\;\; \forall m\in \mathbb{Z}^\ast.
\end{eqnarray*}
Note that $a_0(M_0)=M_0$ and $f_{0,1}=1$. Therefore,
\begin{eqnarray}\label{eq8}
deg_{M_0}(a_m(M_0))=deg_{M_0}(f_{m,1})+1,\;\; \forall m\in \mathbb{Z}.
\end{eqnarray}
By (\ref{eq8}), we have $deg_{M_0}(a_m(M_0))\geq 1$ for any $m\in \mathbb{Z}$. Therefore,
$deg_{M_0}(\frac{\partial a_m(M_0)}{\partial M_0})\geq 0$ for any $m\in \mathbb{Z}$. Consequently, by (\ref{eq8}),
\begin{eqnarray}
\label{eq9} &deg_{M_0}\Big(\frac{\partial a_n(M_0)}{\partial M_0} a_m(M_0)\Big)=deg_{M_0}(f_{m,1}a_n(M_0)), \;\; \forall m, n\in \mathbb{Z},
\end{eqnarray}
according to that $deg_{M_0}(a_n(M_0))=deg_{M_0}(\frac{\partial a_n(M_0)}{\partial M_0})+1$.

By (\ref{eq9}), we set $a_m(M_0)=\sum\limits_{i=0}^{k_m}c_{m,i}M_0^i$ and $f_{m,1}=\sum\limits_{i=0}^{k_m-1}d_{m,i}M^{i}_0$ for any $m\in \mathbb{Z}$, where $k_m\geq 1$, $c_{m, k_m}\neq 0$ and $d_{m,k_m-1}\neq 0$.
If there exist $m_0$ and $n_0\in \mathbb{Z}^\ast$ such that $deg_{M_0}(a_{m_0+n_0}(M_0))<deg_{M_0}(f_{m_0,1}a_{n_0}(M_0))$, i.e. $k_{m_0+n_0}< k_{m_0}+k_{n_0}-1$,
by comparing the coefficients of $M_0^{k_{m_0}+k_{n_0}-1}$ in (\ref{eq6}),
we get
\begin{eqnarray}\label{eq10}
-3m_0k_{n_0}c_{m_0,k_{m_0}}-n_0d_{m_0,k_{m_0}-1}=0.
\end{eqnarray}
Note that $deg_{M_0}\Big(\frac{\partial a_n(M_0)}{\partial M_0}a_m(M_0)\Big)=deg_{M_0}\Big(\frac{\partial a_m(M_0)}{\partial M_0}a_n(M_0)\Big)$ for any $m$, $n\in \mathbb{Z}$. Therefore, by (\ref{eq9}), we also obtain \begin{eqnarray*}
deg_{M_0}(a_{m_0+n_0}(M_0))<deg_{M_0}(f_{n_0,1}a_{m_0}(M_0)).
\end{eqnarray*}
Then similar to that above, we can get
\begin{eqnarray}\label{eq11}
-3n_0k_{m_0}c_{n_0,k_{n_0}}-m_0d_{n_0,k_{n_0}-1}=0.
\end{eqnarray}\label{eq13}
Moreover, by (\ref{eq7}) and $d_{0, k_0-1}=k_0c_{0, k_0}$, we have
\begin{eqnarray}
d_{m, k_m-1}=k_{-3m}c_{m,k_m},\;\; \forall m\in \mathbb{Z}.
\end{eqnarray}
Taking it into (\ref{eq10}) and (\ref{eq11}), we get
\begin{eqnarray}\label{eq14}
3m_0k_{n_0}=-n_0k_{-3m_0},\;\;\;3n_0k_{m_0}=-m_0k_{-3n_0}.
\end{eqnarray}
Note that $k_{n_0}\geq 1$ and $k_{m_0}\geq 1$. We get $m_0n_0< 0$.
On the other hand, we have
\begin{eqnarray}\label{[L,L]}
&\left[L_m,L_n\right]\cdot1=(m-n)(f_{m+n, 1}L_0+f_{m+n,0})\label{LM} \\
&=-3m\frac{\partial f_{n,1}}{\partial M_0}(L_0+m)a_m(M_0)+ mf_{n,1}(f_{m,1}L_0+f_{m,0})-3m\frac{\partial f_{n,0}}{\partial M_0}a_m(M_0)\nonumber\\
&\quad-m\frac{\partial f_{n,0}}{\partial Y_0}p_m(M_0,Y_0)-\frac{m^2}{2}\frac{\partial^2 f_{n,0}}{\partial Y^2_0}a_m(M_0)+3n\frac{\partial f_{m,1}}{\partial M_0}(L_0+n)a_n(M_0)\nonumber\\
&\quad-nf_{m,1}(f_{n,1}L_0+f_{n,0})+3n\frac{\partial f_{m,0}}{\partial M_0}a_n(M_0)
+n\frac{\partial f_{m,0}}{\partial Y_0}p_n(M_0,Y_0)+\frac{n^2}{2}\frac{\partial^2 f_{m,0}}{\partial Y^2_0}a_n(M_0).\nonumber
\end{eqnarray}
Considering the coefficients of $L_0$, we have
\begin{eqnarray}
\label{eq12}&&-3m\frac{\partial f_{n,1}}{\partial M_0}a_m(M_0)+mf_{n,1}f_{m,1}+3n\frac{\partial f_{m,1}}{\partial M_0}a_n(M_0)+mf_{n,1}f_{m,1}-nf_{m,1}f_{n,1}\\
&&=(m-n)f_{m+n,1}.\nonumber
\end{eqnarray}
Let $m=m_0$ and $n=n_0$ in (\ref{eq12}). Since $k_{m_0+n_0}< k_{m_0}+k_{n_0}-1$, comparing the coefficients of $M_0^{k_{n_0}+k_{m_0}-2}$, we get
\begin{eqnarray}
&&\label{eq15}-3m_0(k_{n_0}-1)d_{n_0,k_{n_0}-1}c_{m_0, k_{m_0}}+m_0d_{n_0,k_{n_0}-1}d_{m_0,k_{m_0}-1}\\
&&+3n_0(k_{m_0}-1)d_{m_0,k_{m_0}-1}c_{n_0,k_{n_0}}-n_0d_{m_0,k_{m_0}-1}d_{n_0,k_{n_0}-1}=0\nonumber.
\end{eqnarray}
Using (\ref{eq13}) and (\ref{eq14}), (\ref{eq15}) can be reduced to
\begin{eqnarray*}
m_0k_{-3n_0}-n_0k_{-3m_0}=0,
\end{eqnarray*}
which means that $m_0n_0>0$. Therefore, we get a contradiction. Consequently, we have
\begin{eqnarray}
\label{eq16}deg_{M_0}(a_{m+n}(M_0))=deg_{M_0}(\frac{\partial a_n(M_0)}{\partial M_0}a_m(M_0)), \;\; \forall m, n\in \mathbb{Z}.
\end{eqnarray}
Note that $deg_{M_0}(a_{0}(M_0))=1$. Therefore, by (\ref{eq16}), we get
\begin{eqnarray*}
deg_{M_0}(\frac{\partial a_{-m}(M_0)}{\partial M_0}a_m(M_0))=1, \forall m\in \mathbb{Z}.
\end{eqnarray*}
Thus, $deg_{M_0}(a_m(M_0))=1$ for any $m\in \mathbb{Z}$ due to that $deg_{M_0}(a_m(M_0))\geq 1$.
Therefore, $deg_{M_0}(f_{m,1})=0$ for any $m\in \mathbb{Z}$, i.e. $f_{m,1}\in\mathbb{C}^\ast$.

We assume that
\begin{align*}
a_m(M_0)=c_{m,0}+c_{m,1}M_0,\forall m\in\mathbb{Z}, c_{m,0}, c_{m,1}\in \mathbb{C}.
\end{align*}
Taking it into (\ref{eq6}), we have
\begin{align*}
&(-3m-n)(c_{m+n,0}+c_{m+n,1}M_0)\\
&=-3mc_{n,1}c_{m,1}M_0-3mc_{n,1}c_{m,0}-nf_{m,1}c_{n,1}M_0-nf_{m,1}c_{n,0}.
\end{align*}
Considering the coefficients of $M_0$, we have
\begin{align*}
(-3m-n)c_{m+n,1}=-3mc_{n,1}c_{m,1}-nf_{m,1}c_{n,1}.
\end{align*}
Taking $n=-3m$, we have $c_{m,1}=f_{m,1}$. Then if $n\neq -3m$, then $c_{m+n,1}=c_{n,1}c_{m,1}$.
Assume that $f_{1,1}=c_{1,1}=\lambda\in \mathbb{C}^\ast$. Then we have $c_{m,1}=f_{m,1}=\lambda^m$ for any $m\in \mathbb{Z}$. Similarly, if $n\neq -3m$, we get $c_{m+n, 0}=c_{n,1}c_{m,0}$. Let $m=0$. We get $c_{m,0}=0$ for any $m\in \mathbb{Z}$ due to that $c_{0,0}=0$.
Therefore, we have
$a_m(M_0)=\lambda^mM_0$ and $f_{m,1}=\lambda^m$ for any $m\in \mathbb{Z}$ and some $\lambda \in \mathbb{C}^\ast$.
\end{proof}
\begin{lemma}\label{Y1}
For $p_m(M_0,Y_0)$, there are two cases.

(a) \begin{eqnarray*}
p_n(M_0,Y_0)=\frac{m_0-n}{m_0}\lambda^nY_0+\alpha_n(M_0),
\end{eqnarray*}
for some $m_0\in \mathbb{Z}^\ast$, where $\alpha_n(M_0)\in \mathbb{C}[M_0]$, $n\in \mathbb{Z}$ and $\alpha_0(M_0)=0$.

(b) \begin{eqnarray*}
p_n(M_0,Y_0)=n \lambda^n \sum\limits_{i=1}^{q}T_iY_0^i+\lambda^nY_0+T_{n,0},
\end{eqnarray*}
where $q\in \mathbb{N}$, $n\in\mathbb{Z}$, $ T_i$, $T_{n,0}\in\mathbb{C}[M_0]$ and $T_{0,0}=0$.

\end{lemma}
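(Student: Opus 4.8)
The plan is to read off $p_m$ from the single structural bracket \eqref{yy}, namely $[Y_m,Y_n]=(m-n)M_{m+n}$, evaluated on the generator $1$. Using the formulas $(A2)$ and $(A3)$ from the proof of Lemma \ref{lemma3.1}, together with Lemma \ref{Lemma5} (which gives $a_m(M_0)=\lambda^mM_0$) and Lemma \ref{Lemma3} (which gives $p_m=p_m(M_0,Y_0)$, free of $L_0$), I would first compute
\[
Y_m.p_n-Y_n.p_m=m\lambda^mM_0\frac{\partial p_n}{\partial Y_0}-n\lambda^nM_0\frac{\partial p_m}{\partial Y_0},
\]
the undifferentiated products $p_np_m$ and $p_mp_n$ cancelling. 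Since this must equal $(m-n)M_{m+n}.1=(m-n)\lambda^{m+n}M_0$, cancelling $M_0$ in the domain $\mathbb{C}[M_0,Y_0]$ produces the functional equation
\[
m\lambda^m\frac{\partial p_n}{\partial Y_0}-n\lambda^n\frac{\partial p_m}{\partial Y_0}=(m-n)\lambda^{m+n},\qquad m,n\in\mathbb{Z}.
\]

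Next I would solve this equation coefficientwise in $Y_0$. Writing $p_m=\sum_iP_{m,i}(M_0)Y_0^i$ with $P_{m,i}\in\mathbb{C}[M_0]$ and abbreviating $\hat P_{m,i}=\lambda^{-m}P_{m,i}$, comparison of the coefficient of $Y_0^{i-1}$ for $i\ge2$ gives $m\hat P_{n,i}=n\hat P_{m,i}$, while the constant term gives $m\hat P_{n,1}-n\hat P_{m,1}=m-n$. Specializing $n=1$ solves both at once: $\hat P_{m,i}=mT_i$ for $i\ge2$ (with $T_i:=\hat P_{1,i}$) and $\hat P_{m,1}=1+mw$ (with $w:=\hat P_{1,1}-1$), whereas the term $P_{m,0}\in\mathbb{C}[M_0]$ stays completely free, subject only to $P_{0,0}=0$ coming from $p_0=Y_0.1=Y_0$. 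This already yields the shape in (b), namely $p_m=m\lambda^m\sum_{i\ge1}T_iY_0^i+\lambda^mY_0+T_{m,0}$ with $T_1=w$ and $T_{m,0}=P_{m,0}$.

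Finally I would split into the two cases according to whether the $Y_0$-dependence of some $p_m$ can collapse entirely. If there is an $m_0\in\mathbb{Z}^*$ with $p_{m_0}\in\mathbb{C}[M_0]$, then $P_{m_0,i}=0$ for all $i\ge1$; since $m_0\neq0$ this forces $T_i=0$ for every $i\ge2$ and $1+m_0w=0$, so that $w=-1/m_0$ is constant and $p_n=\tfrac{m_0-n}{m_0}\lambda^nY_0+\alpha_n(M_0)$, which is exactly (a) (and $m_0$ is a nonzero integer by construction). If no such $m_0$ exists, the expression from the previous paragraph is already in the form (b). The step I expect to be most delicate is this coefficient bookkeeping — in particular checking that a degeneracy occurring at one integer index $m_0$ propagates to the global vanishing of all the higher coefficients $T_i$ and pins $w$ down to $-1/m_0$ — rather than the bracket computation itself, which stays clean precisely because $a_m$ is linear in $M_0$ and $p_m$ is independent of $L_0$.
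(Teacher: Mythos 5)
Your proposal is correct, and its engine is the same as the paper's: both extract from $[Y_m,Y_n].1=(m-n)M_{m+n}.1$, via $(A2)$, $(A3)$, Lemma \ref{Lemma3} and Lemma \ref{Lemma5}, the identity
$m\lambda^m\frac{\partial p_n}{\partial Y_0}-n\lambda^n\frac{\partial p_m}{\partial Y_0}=(m-n)\lambda^{m+n}$
and then compare coefficients of powers of $Y_0$. Where you differ is the organization of the case analysis. The paper dichotomizes first: either some $p_{m_0}$ with $m_0\in\mathbb{Z}^\ast$ has $Y_0$-degree $\leq 0$, which yields (a) directly by setting $m=m_0$; or every $p_m$ has $Y_0$-degree $\geq 1$, in which case the paper must first argue, by comparing top $Y_0$-degrees in \eqref{[Y,Y]}, that all these degrees coincide with a common $q$ before it can write $p_m=\sum_{i=0}^{q}e_{m,i}Y_0^i$ and compare coefficients to get (b). You instead derive one universal shape for all $p_m$ by specializing the functional equation at $n=1$ (the finitely supported expansion $p_m=\sum_iP_{m,i}Y_0^i$ makes the degree-equalization step unnecessary), and then recover (a) as the degenerate locus where $T_i=0$ for $i\geq 2$ and $T_1=-1/m_0$ for some $m_0\in\mathbb{Z}^\ast$. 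This buys a shorter argument and makes explicit something the paper leaves implicit: case (a) is literally an instance of the shape in (b) (take $q=1$, $T_1=-1/m_0$, $T_{n,0}=\alpha_n$), so the two cases of the lemma overlap rather than being exclusive; the reason for isolating (a) at all is that Lemma \ref{Lemma6} later rules out exactly that degeneration. Your bookkeeping is sound: the relations $m\hat P_{n,i}=n\hat P_{m,i}$ ($i\geq 2$) and $m\hat P_{n,1}-n\hat P_{m,1}=m-n$, once specialized at $n=1$, do pin down $\hat P_{m,i}=mT_i$ and $\hat P_{m,1}=1+mT_1$ for all $m\in\mathbb{Z}$, and these forms are consistent with the full two-parameter family of equations, so nothing is lost by the specialization.
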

\begin{proof}
For any $m$, $n\in \mathbb{Z}$, we have
\begin{eqnarray}
\label{[Y,Y]}\left[Y_m,Y_n\right]. 1&=&(m-n)M_{m+n}. 1 \\
&=&Y_m. (Y_n. 1)-Y_n. (Y_m. 1)\nonumber\\
&=&m\frac{\partial p_n(M_0,Y_0)}{\partial Y_0}a_m(M_0)-n\frac{\partial p_m(M_0,Y_0)}{\partial Y_0}a_n(M_0)\nonumber\\
&=&(m-n)a_{m+n}(M_0).\nonumber
\end{eqnarray}
Note that $a_m(M_0)=\lambda^mM_0$ for any $m\in \mathbb{Z}$.

If there exists some $m_0\in \mathbb{Z}^\ast$ such that $p_{m_0}(M_0, Y_0)=0$ or $deg_{Y_0}(p_{m_0}(M_0,Y_0))=0$, setting $m=m_0$ in (\ref{[Y,Y]}), we get
\begin{eqnarray}
p_n(M_0,Y_0)=\frac{m_0-n}{m_0}\lambda^nY_0+\alpha_n(M_0),\;\;
\end{eqnarray}
for any $n\in \mathbb{Z}$ with $n\neq m_0$ and some $\alpha_n(M_0)\in \mathbb{C}[M_0]$.
Note that $\alpha_0(M_0)=0$ by $p_0(M_0,Y_0)=Y_0$. This is Case $(a)$.

Next, by Case (a), we can assume that $deg_{Y_0}(p_m(M_0,Y_0))\geq 1$ for any $m\in \mathbb{Z}$. Set
$p_m(M_0,Y_0)= \sum\limits_{i=0}^{k_m}e_{m,i}Y^i_0$,
where $e_{m,i}\in\mathbb{C}\left[M_0\right]$, $k_{m, k_m}\neq 0$ and $k_m\geq 1$.

If there exists $m_0\in \mathbb{Z}^\ast$ such that $k_{m_0}=q\geq 2$, then we get $k_n=q$ for any $n\in \mathbb{Z}^\ast$ by comparing the degrees of $Y_0$ in (\ref{[Y,Y]}).
Therefore, the last case is $k_n=1$ for any $n\in \mathbb{Z}$. In both two cases, we can set
$p_m(M_0,Y_0)= \sum\limits_{i=0}^{q}e_{m,i}Y^i_0$, where $q\geq 1$, $e_{m,i}\in\mathbb{C}\left[M_0\right]$ and $k_{m, q}\neq 0$. Taking them into (\ref{[Y,Y]}), we have
\begin{eqnarray}
&&\label{eqq1}m\lambda^m\sum\limits_{i=2}^{q}e_{n,i}Y^{i-1}_0-n\lambda^n\sum\limits_{i=2}^{q}e_{m,i}Y^{i-1}_0=0,\\
&&\label{eqq2}m\lambda^me_{n,1}-n\lambda^ne_{m,1}=(m-n)\lambda^{m+n}.
\end{eqnarray}
If $q=1$, (\ref{eqq1}) naturally holds. If $q\geq 2$, by (\ref{eqq1}), we get
\begin{eqnarray*}
m\lambda^m e_{n,i}=n\lambda^ne_{m,i}, \;\;\;\text{ $i=2$, $3$, $\cdots$, $q$,}
\end{eqnarray*}
for any $m$, $n\in \mathbb{Z}$. Then one obtains $e_{m,i}=m\lambda^mT_i$, where $T_i\in\mathbb{C}\left[M_0\right]$ and $i\geq 2$.
By (\ref{eqq2}), we have $e_{m,1}=m\lambda^mT_1+\lambda^m$, where $T_1\in\mathbb{C}[M_0].$
Then we have
\begin{align}\label{pm}
p_m(M_0,Y_0)=m \lambda^m \sum\limits_{i=1}^{q}T_iY_0^i+\lambda^mY_0+T_{m,0},
\end{align}
where $m\in\mathbb{Z}^*,\ \lambda\in\mathbb{C}^*,\ T_i,\ T_{m,0}\in\mathbb{C}[M_0].$ Note that $p_0(M_0,Y_0)=Y_0$. Therefore $T_{0,0}=0$. This is Case $(b)$.
\end{proof}
\begin{lemma}\label{Lemma6}
Case $(a)$ in Lemma \ref{Y1} does not hold for $B$ to be a module of $\mathfrak{tsv}$ with the actions as $(A1)$-$(A3)$.
\end{lemma}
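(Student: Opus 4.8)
The plan is to bring in the one structural relation of $\mathfrak{tsv}$ that has not yet been used, namely $[L_m,Y_n]=-(m+n)Y_{m+n}$. Up to this point the brackets $[M_m,M_n]$, $[Y_m,M_n]$, $[L_m,M_n]$, $[Y_m,Y_n]$ and the $L_0$-coefficient of $[L_m,L_n]$ have all been spent to obtain $a_m(M_0)=\lambda^mM_0$, $g_m=\lambda^mL_0+f_{m,0}$ with $f_{m,0}\in\mathbb{C}[M_0,Y_0]$ (Lemma \ref{Lemma5}), and, in Case $(a)$, $p_n(M_0,Y_0)=\frac{m_0-n}{m_0}\lambda^nY_0+\alpha_n(M_0)$ (Lemma \ref{Y1}). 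The defining feature of Case $(a)$ is that the coefficient of $Y_0$ in $p_n$ vanishes precisely at $n=m_0$; I expect this to be incompatible with the way $L_m$ and $Y_n$ must intertwine, and the bracket $[L_m,Y_n]$ is the natural place to detect the clash.

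Concretely, I would evaluate $[L_m,Y_n].1=L_m.(Y_n.1)-Y_n.(L_m.1)$ by applying $(A1)$ to $Y_n.1=p_n$ and $(A3)$ to $L_m.1=g_m$. Because $p_n$ is free of $L_0$ while $g_m$ is linear in $L_0$, the two terms carrying $L_0p_n$ cancel, and so do the two terms $f_{m,0}p_n$; what remains is
\begin{align*}
L_m.(Y_n.1)-Y_n.(L_m.1)&=-3m\lambda^mM_0\frac{\partial\alpha_n}{\partial M_0}-m\frac{m_0-n}{m_0}\lambda^np_m\\
&\quad-n\lambda^nM_0\frac{\partial f_{m,0}}{\partial Y_0}-n\lambda^mp_n.
\end{align*}
Equating this with $-(m+n)p_{m+n}$ produces a single polynomial identity in $\mathbb{C}[M_0,Y_0]$ that Case $(a)$ would have to satisfy for all $m,n\in\mathbb{Z}$.

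The contradiction then falls out of the specialization $m=n=m_0$. There $\frac{m_0-m_0}{m_0}=0$, so $p_{m_0}=\alpha_{m_0}(M_0)$ is free of $Y_0$ and the second term disappears; I would read off the coefficient of $Y_0$. On the left every surviving contribution is either free of $Y_0$ (the two $\alpha_{m_0}$-terms) or carries an explicit factor $M_0$ (the $\partial_{Y_0}f_{m_0,0}$-term), so the coefficient of $Y_0$ is divisible by $M_0$ and has no constant term in $M_0$; on the right it equals $-2m_0\cdot\frac{m_0-2m_0}{m_0}\lambda^{2m_0}=2m_0\lambda^{2m_0}$. Comparing the $M_0$-free parts gives $0=2m_0\lambda^{2m_0}$, which is impossible since $m_0\neq0$ and $\lambda\neq0$. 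Hence Case $(a)$ cannot hold.

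The genuine obstacle is not conceptual but organizational: the polynomial $f_{m,0}$ is still completely undetermined at this stage, so a head-on comparison of coefficients stalls on the unknown $\frac{\partial f_{m,0}}{\partial Y_0}$. The key observation that dissolves this is that $f_{m,0}$ enters the identity only through the combination $M_0\frac{\partial f_{m,0}}{\partial Y_0}$, so restricting attention to the $M_0$-free part of a single coefficient eliminates $f_{m,0}$ entirely; choosing $m=n=m_0$ so that the $Y_0$-term of $p_{m_0}$ drops out is what makes that one coefficient clean enough to expose the contradiction. The only side check needed is that the $\frac{m^2}{2}\partial_{Y_0}^2$ term of $(A1)$ contributes nothing, which is automatic since $p_n$ is at most linear in $Y_0$.
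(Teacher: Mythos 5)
Your proof is correct, and it reaches the contradiction by a genuinely shorter route than the paper's. Both arguments start from the same identity, namely the bracket $[L_m,Y_n].1=-(m+n)p_{m+n}$ expanded via $(A1)$ and $(A3)$ (the paper's (\ref{LY})) and specialized at $n=m_0$; your expansion is accurate, including the cancellation of the $L_0p_n$ and $f_{m,0}p_n$ terms, the vanishing of the $\partial^2_{Y_0}$ term, and --- crucially --- the factors $a_m(M_0)=\lambda^m M_0$. The difference is what happens next. You observe that $f_{m,0}$ enters only through $M_0\frac{\partial f_{m,0}}{\partial Y_0}$, so at $m=n=m_0$ the $Y_0$-coefficient of the module-action side lies in $M_0\mathbb{C}[M_0]$, while on the bracket side it is the nonzero constant $2m_0\lambda^{2m_0}$; comparing constant terms in $M_0$ kills Case (a) at once, using nothing beyond $[L_m,Y_{m_0}]$. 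The paper instead uses the $Y_0$-coefficient comparison in its equation (\ref{eqq3}) to solve for the $Y_0^2$-part of $f_{m,0}$, getting $f_{m,0}=-\frac{m(m+m_0)}{2m_0^2}\lambda^mY_0^2+\beta_m(M_0)Y_0+\gamma_m(M_0)$, and then substitutes this into the $[L_m,L_n]$ identity (\ref{[L,L]}), comparing $Y_0^2$-coefficients to force $m_0=\frac{1}{2}\notin\mathbb{Z}$. It is worth noting that in the paper's displayed (\ref{eqq3}) the factors $M_0$ arising from $a_m(M_0)=\lambda^mM_0$ (in the terms $-3m\frac{\partial p_{m_0}}{\partial M_0}a_m(M_0)$ and $-m_0\frac{\partial f_{m,0}}{\partial Y_0}a_{m_0}(M_0)$) appear to have been dropped; with those factors restored, as in your identity, the equation is already contradictory for any $m\neq 0,-m_0$ by exactly your divisibility observation, and the detour through $[L_m,L_n]$ becomes unnecessary. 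So your argument is not only valid but is, in effect, a repaired and streamlined version of the paper's: one bracket, one coefficient, one divisibility check.
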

\begin{proof}
By $(A1)$ and $(A3)$, we have
\begin{eqnarray}
\left[L_m,Y_n\right]. 1&=&(-m-n)p_{m+n}(M_0,Y_0)\nonumber\\
&=&-m\frac{\partial p_n(M_0,Y_0)}{\partial Y_0}p_m(M_0,Y_0)-\frac{m^2}{2}\frac{\partial^2p_n(M_0,Y_0)}{\partial Y^2_0}a_m(M_0,Y_0)\nonumber\\
\label{LY}&&\hspace{0.4cm}-3m\frac{\partial p_n(M_0, Y_0)}{\partial M_0}a_m(M_0)-nf_{m,1}p_n(M_0,Y_0)-n\frac{\partial f_{m,0}}{\partial Y_0}a_n(M_0) .
\end{eqnarray}
Let $n=m_0$ in (\ref{LY}). Since $p_n(M_0,Y_0)=\frac{m_0-n}{m_0}\lambda^nY_0+\alpha_n(M_0)$ for any $n\in \mathbb{Z}$ in Case $(a)$, we get
\begin{eqnarray}
&&\label{eqq3}-3m\lambda^m\frac{\partial \alpha_{m_0}(M_0)}{\partial M_0}-m_0\lambda^m \alpha_{m_0}(M_0)-m_0\lambda^{m_0}\frac{\partial f_{m,0}}{\partial Y_0}\\
&=&-(m_0+m)(-\frac{m}{m_0}\lambda^{m_0+m}Y_0+\alpha_{m_0+m}(M_0)).\nonumber
\end{eqnarray}
By comparing the coefficients of $Y_0$ in (\ref{eqq3}), one obtains
\begin{eqnarray}
&& f_{m,0}=-\frac{m(m+m_0)}{2m_0^2}\lambda^mY_0^2+\beta_m(M_0)Y_0+\gamma_m(M_0),
\end{eqnarray}
for any $m\in \mathbb{Z}$, where $\beta_m(M_0)$, $\gamma_m(M_0)\in \mathbb{C}[M_0]$. Taking them into (\ref{[L,L]}) and comparing the coefficients of $Y_0^2$, we have
\begin{eqnarray*}
&&-m\frac{m(m_0+m)}{2m_0^2}+m(m_0-m)\frac{n(m_0+n)}{2m_0^2}+n\frac{n(m_0+n)}{2m_0^2}-n(m_0-n)\frac{m(m_0+m)}{2}\\
&&=-(m-n)\frac{(m+n)(m_0+m+n)}{2m_0^2},
\end{eqnarray*}
which is equivalent to $2m_0(mn^2-m^2n)=mn^2-m^2n$ for any $m$, $n\in \mathbb{Z}$. Therefore $m_0=\frac{1}{2} \notin \mathbb{Z}$. We get a contradiction. Then this lemma holds.
\end{proof}
\begin{lemma} $p_m(L_0,M_0,Y_0)=m \lambda^m \sum\limits_{i=0}^{q}T_iY_0^i+\lambda^mY_0+am^2\lambda^mM_0+b\lambda^m,\forall m\in\mathbb{Z}^\ast,$ where $q\geq 1$, $a$, $b\in\mathbb{C}$, $T_i\in\mathbb{C}[M_0].$
\end{lemma}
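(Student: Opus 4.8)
The plan is to bring in the only bracket relation not yet exploited, $[L_m,Y_n]=-(m+n)Y_{m+n}$, evaluated on the generator $1$. Since Lemma \ref{Lemma6} rules out Case (a) of Lemma \ref{Y1}, I may start from the Case (b) expression $p_m(M_0,Y_0)=m\lambda^m\sum_{i=1}^qT_iY_0^i+\lambda^mY_0+T_{m,0}$ with $T_i,T_{m,0}\in\mathbb{C}[M_0]$ and $T_{0,0}=0$, together with $a_m=\lambda^mM_0$, $f_{m,1}=\lambda^m$ and $g_m=\lambda^mL_0+f_{m,0}$ from Lemma \ref{Lemma5}. In the identity \eqref{LY} for $[L_m,Y_n].1$ produced by $(A1)$ and $(A3)$, every one of $p_m,a_m,f_{m,0}$ lies in $\mathbb{C}[M_0,Y_0]$ and $f_{m,1}=\lambda^m$ is a scalar, so \eqref{LY} is already a polynomial identity in $M_0$ and $Y_0$, which I analyze coefficient by coefficient in $Y_0$.

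The top powers of $Y_0$ involve only the $T_i$ and the leading $Y_0$-coefficients of $f_{m,0}$; they reconfirm the $Y_0$-part of $p_m$ and record the coefficients of $f_{m,0}$ needed afterward. The decisive equation is the constant-in-$Y_0$ coefficient of \eqref{LY}. Writing $\tilde T_{m,0}:=\lambda^{-m}T_{m,0}$, letting a prime denote $\partial/\partial M_0$, and letting $\tilde\beta_m$ be the coefficient of $Y_0$ in $\lambda^{-m}f_{m,0}$, it becomes the two-variable recursion
\begin{equation*}
-(m+n)\tilde T_{m+n,0}=-m(nT_1+1)\tilde T_{m,0}-m^2nT_2M_0-3m\tilde T_{n,0}'M_0-n\tilde T_{n,0}-n\tilde\beta_mM_0 ,
\end{equation*}
which is what must be solved for $\tilde T_{m,0}$.

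Granting control of $\tilde\beta_m$ (addressed below), I would solve this by tracking the $M_0$-degree and the $m$-dependence at once: comparing the coefficient of each $M_0^j$ turns the recursion into scalar functional equations in $m,n$ that force $\tilde T_{m,0}$ to be polynomial in $m$ --- linear in $m$ in every $M_0$-degree, except that the degree-one part may carry an extra quadratic term $am^2M_0$ and the degree-zero part an extra $m$-independent constant $b$. Collecting the strictly-linear-in-$m$ contributions into a single polynomial $T_0\in\mathbb{C}[M_0]$ yields $\tilde T_{m,0}=mT_0+am^2M_0+b$, that is $T_{m,0}=m\lambda^mT_0+am^2\lambda^mM_0+b\lambda^m$; the normalization $T_{0,0}=0$ is compatible precisely because the assertion (and the $b$-term) is made only for $m\neq 0$. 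Absorbing the new $i=0$ term $m\lambda^mT_0$ into the sum then rewrites $p_m$ as $m\lambda^m\sum_{i=0}^qT_iY_0^i+\lambda^mY_0+am^2\lambda^mM_0+b\lambda^m$, as claimed.

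The main obstacle is exactly this control of $\tilde\beta_m$: the as-yet-undetermined remainder $f_{m,0}$ of $g_m$ enters the constant-in-$Y_0$ equation, so that equation does not close on $T_{m,0}$ by itself, and no single specialization of $(m,n)$ cancels the $\tilde\beta_m$-term. I therefore expect to first determine $\tilde\beta_m$ from the higher $Y_0$-degree components of \eqref{LY} together with the $[L_m,L_n]$-identity \eqref{[L,L]} (which constrains $f_{m,0}$), and only then to carry out the degree-and-parameter bookkeeping above. The remaining work is lengthy but routine coefficient comparison in the two integer parameters $m,n$.
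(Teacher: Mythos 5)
Your setup matches the paper's: starting from Case (b) of Lemma \ref{Y1} and Lemma \ref{Lemma5}, substituting into \eqref{LY}, and isolating the constant-in-$Y_0$ coefficient, which is exactly the paper's recursion \eqref{3} (your $\tilde\beta_m$ is the paper's $r_{m,1}$). The gap sits exactly at what you call the main obstacle, and your fallback plan does not repair it. First, the plan to determine $\tilde\beta_m$ ``from the higher $Y_0$-degree components of \eqref{LY}'' cannot work: writing $f_{m,0}=\sum_i r_{m,i}Y_0^i$, the only place $r_{m,1}$ enters \eqref{LY} is through the term $-n\frac{\partial f_{m,0}}{\partial Y_0}a_n(M_0)$, i.e., in the constant-in-$Y_0$ coefficient itself; the coefficient of $Y_0^k$ for $k\geq 1$ involves $r_{m,k+1}$, never $r_{m,1}$. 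Second, invoking \eqref{[L,L]} to pin down $f_{m,0}$ before solving for $T_{m,0}$ reverses the paper's logical order: in the paper $f_{m,0}$ is determined only afterwards, in Lemma \ref{Lemma10}, precisely by using the closed form of $p_m$ proved here, and you give no indication of how that much messier identity would close on its own.

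The idea you are missing is that, while (as you correctly note) no single specialization of $n$ kills the $r_{m,1}$-term, a linear combination of two specializations does: setting $n=1$ and $n=-1$ in \eqref{3} gives \eqref{*1} and \eqref{*2}, and adding \eqref{*1} to $\lambda^2$ times \eqref{*2} cancels simultaneously the $r_{m,1}M_0$-term, the nonlinear $T_1T_{m,0}$-term, and the $T_2M_0$-term, leaving the closed linear recursion \eqref{1} in the $T_{m,0}$ alone; the same $\pm n$ pairing is reused later with general $n$. After that elimination there is still substantive work which your middle paragraph only asserts: solving second-order difference equations in $m$ (the substitutions $b_t$ and $a_t$ leading to \eqref{*3} and \eqref{*4}) and the differential equation $M_0\,\partial A/\partial M_0=A$, which gives $A=cM_0$ and hence $T_{m,0}=m\lambda^mT_0+am^2\lambda^mM_0+b\lambda^m$ for $m\neq 0$. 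Your claim that comparing $M_0$-coefficients ``forces'' linearity in $m$ up to an $am^2M_0$-term and a constant $b$ names the right answer but is not an argument --- and without the elimination step the recursion you would be analyzing is not even closed in the $T_{m,0}$.
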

\begin{proof}
By Lemmas \ref{Lemma5}, \ref{Y1} and \ref{Lemma6},
we assume that
\begin{eqnarray*}
&&g_m(L_0,M_0,Y_0)=\lambda^mL_0+\sum\limits_{i=0}^{t_m}r_{m,i}Y_0^i,\\
&&p_m(M_0,Y_0)=m \lambda^m \sum\limits_{i=1}^{q}T_iY_0^i+\lambda^mY_0+T_{m,0},
\end{eqnarray*}
where $r_{m,i}\in\mathbb{C}[M_0]$, $q\geq 1$, $n\in\mathbb{Z}$, $ T_i$, $T_{n,0}\in\mathbb{C}[M_0]$ and $T_{0,0}=0$. Then substituting them into $\eqref{LY}$, we have
$$
\begin{aligned}
&[L_m,Y_n]\cdot1=-(m+n)\left((m+n)\lambda^{m+n}\sum\limits_{i=1}^{q}T_iY_0^i+\lambda^{m+n}Y_0+T_{m+n,0}\right)\\
&=-m\left(n\lambda^n\sum\limits_{i=1}^{q}iT_iY_0^{i-1}+\lambda^n\right)
\left(m\lambda^m\sum\limits_{i=1}^{q}T_iY_0^i+\lambda^mY_0+T_{m,0}\right)\\
&-\frac{m^2}{2}\lambda^mM_0\left(n\lambda^n\sum\limits_{i=1}^{q}i(i-1)T_iY_0^{i-2}\right)
-3m\lambda^mM_0\left(n\lambda^n\sum\limits_{i=1}^{q}\frac{\partial T_i}{\partial M_0}Y_0^i+\frac{\partial T_{n,0}}{\partial M_0}\right)\\
&-n\lambda^m\left(n\lambda^n\sum\limits_{i=1}^{q}T_iY_0^i+\lambda^nY_0+T_{n,0}\right)
-n\lambda^nM_0\sum\limits_{i=1}^{t_m}ir_{m,i}Y_0^{i-1}\\
\end{aligned}
$$

By  observing the constant terms in terms of $Y_0$, we have
\begin{eqnarray}\label{3}
 -(m+n)T_{m+n,0}&=&m(n\lambda^nT_1+\lambda^n)T_{m,0}-m^2n\lambda^nT_2\lambda^mM_0\\
  &&-3m\frac{\partial T_{n,0}}{\partial M_0}\lambda^mM_0-n\lambda^mT_{n,0}-nr_{m,1}\lambda^nM_0.\nonumber
\end{eqnarray}
Note that if $q=1$, $T_2=0$.
Setting $n=\pm1$ in (\ref{3}), we have
\begin{eqnarray}\label{*1}
-(m+1)T_{m+1,0}&=&-m(\lambda T_1+\lambda)T_{m,0}-m^2T_2\lambda^{m+1}M_0\\
&&-3m\frac{\partial T_{1,0}}{\partial M_0}\lambda^mM_0-\lambda^mT_{1,0}-r_{m,1}\lambda M_0\nonumber,
\end{eqnarray}
and
\begin{eqnarray}\label{*2}
-(m-1)T_{m-1,0}&=&-m\Big(-\frac{1}{\lambda}T_1+\frac{1}{\lambda}\Big)T_{m,0}+m^2T_2\lambda^{m-1}M_0\\
&&-3m\frac{\partial T_{-1,0}}{\partial M_0}\lambda^mM_0+\lambda^mT_{-1,0}+r_{m,1}\frac{1}{\lambda}M_0.\nonumber
\end{eqnarray}
Using (\ref{*1}) and (\ref{*2}), we get
\begin{align}\label{1}
 \begin{split}
  &2\lambda mT_{m,0}+3m\frac{\partial T_{1,0}}{\partial M_0}\lambda^mM_0+3m\frac{\partial T_{-1,0}}{\partial M_0}\lambda^{m+2}M_0+\lambda^mT_{1,0}-\lambda^{m+2}T_{-1,0}\\
  &=(m+1)T_{m+1,0}+(m-1)T_{m-1,0}\lambda^2.
 \end{split}
\end{align}
Replacing $m$ by $-m$ in (\ref{1}), one obtains
\small{\begin{align}\label{2}
 \begin{split}
   &-2\lambda^{2m+1} mT_{-m,0}-3m\frac{\partial T_{1,0}}{\partial M_0}\lambda^mM_0-3m\frac{\partial T_{-1,0}}{\partial M_0}\lambda^{m+2}M_0+\lambda^mT_{1,0}-\lambda^{m+2}T_{-1,0}\\
  &=(-m+1)\lambda^{2m}T_{-m+1,0}+(-m-1)T_{-m-1,0}\lambda^{2m+2}.
  \end{split}
\end{align}}
Adding $\eqref{1}$ and $\eqref{2}$ up, we have
\small{\begin{align*}
&2\lambda mT_{m,0}-2\lambda^{2m+1} mT_{-m,0}+2\lambda^mT_{1,0}-2\lambda^{m+2}T_{-1,0}\\
&=(m+1)T_{m+1,0}+(m-1)T_{m-1,0}\lambda^2+(-m+1)\lambda^{2m}T_{-m+1,0}+(-m-1)T_{-m-1,0}\lambda^{2m+2}.
\end{align*}}
Set $b_t=t\lambda^{m+1-t}T_{t_,0}$. Then we get
\begin{align*}
2b_m+2b_{-m}+2b_1+2b_{-1}=b_{m+1}+b_{m-1}+b_{-m+1}+b_{-m-1}.
\end{align*}
Then
\begin{align*}
[(b_{m+1}-b_m)+(b_{-(m+1)}-b_{-m})]-[(b_m-b_{m-1})+(b_{-m}-b_{-(m-1)})]=2b_1+2b_{-1}.
\end{align*}
Therefore, we obtain
\begin{align*}
(b_m-b_{m-1})+(b_{-m}-b_{-(m-1)})=2(m-1)(b_1+b_{-1})+(b_1+b_{-1})=(2m-1)(b_1+b_{-1}).
\end{align*}
Thus
\begin{align*}
b_m+b_{-m}=m^2(b_1+b_{-1})+2b_0=m^2(b_1+b_{-1}).
\end{align*}
That is
\begin{eqnarray}\label{*3}
\lambda T_{m,0}-\lambda^{2m+1}T_{-m,0}=m(\lambda^mT_{1,0}-\lambda^{m+2}T_{-1,0}).
\end{eqnarray}
Subtracting $\eqref{2}$ from $\eqref{1}$, we have
\begin{align*}
&2\lambda mT_{m,0}+2\lambda^{2m+1}mT_{-m,0}+6m\frac{\partial T_{1,0}}{\partial M_0}\lambda^mM_0+6m\frac{\partial T_{-1,0}}{\partial M_0}\lambda^{m+2}M_0\\
&=(m+1)T_{m+1,0}-(-m+1)T_{-m+1,0}\lambda^{2m}+(m-1)T_{m-1,0}\lambda^2-(-m-1)T_{-m-1,0}\lambda^{2m+2}.
\end{align*}
Setting $a_t=\lambda^{m+1-t}T_{t,0}$, we have
\begin{align*}
&2ma_m+2ma_{-m}+6mM_0\frac{\partial(a_1+a_{-1})}{\partial M_0}\\
&=(m+1)a_{m+1}-(-m+1)a_{-m+1}+(m-1)a_{m-1}-(-m-1)a_{-m-1}.
\end{align*}
Then we have
\begin{align*}
&[(m+1)(a_{m+1}+a_{-(m+1)})-m(a_m+a_{-m}]-[m(a_m+a_{-m})-(m-1)(a_{m-1}+a_{-m+1})]\\
&=6mM_0\frac{\partial(a_1+a_{-1})}{\partial M_0}.
\end{align*}
Therefore, one gets
\begin{align*}
m(a_m+a_{-m})=\left[\frac{m(m+1)(2m+1)}{12}-\frac{m(m+1)}{4}\right]\cdot6M_0\frac{\partial(a_1+a_{-1})}{\partial M_0}+m(a_{-1}+a_1).
\end{align*}
Thus
\begin{align*}
a_m+a_{-m}=(m^2-1)M_0\frac{\partial(a_1+a_{-1})}{\partial M_0}+(a_{-1}+a_1),\;\;(m\neq0).
\end{align*}
That is
\begin{eqnarray}\label{*4}
&&\lambda T_{m,0}+\lambda^{2m+1}T_{-m,0}=(m^2-1)M_0\lambda^m\frac{\partial(T_{1,0}+\lambda^2T_{-1,0})}{\partial M_0}+\lambda^mT_{1,0}+\lambda^{m+2}T_{-1,0}.
\end{eqnarray}
Setting  $m=1$ in $\eqref{1}$ and $A=\frac{\partial (T_{1,0}+\lambda^2 T_{-1,0})}{\partial M_0}M_0$,  we have
\begin{align}\label{*5}
T_{2,0}=\frac{3}{2}\lambda T_{1,0}+\frac{3}{2}\lambda A-\frac{1}{2}\lambda ^3T_{-1,0}.
\end{align}
By (\ref{*3}), we get
\begin{align}\label{*6}
T_{-2,0}=\frac{3}{2}\lambda ^{-1}T_{-1,0}+\frac{3}{2}\lambda^{-3} A-\frac{1}{2}\lambda ^{-3}T_{1,0}.
\end{align}
Setting $m=2$ in $\eqref{1}$, we have
\begin{align}\label{*7}
T_{3,0}=2\lambda^2 T_{1,0}+4\lambda^2 A-\lambda ^4T_{-1,0}.
\end{align}
Replacing $n$ by $-n$ in $\eqref{3}$, we have
\begin{align}\label{4}
   \begin{split}
   &-m(-n\lambda^{-n}T_1+\lambda^{-n})T_{m,0}+m^2n\lambda^{-n}T_2\lambda^mM_0
   -3m\frac{\partial T_{-n,0}}{\partial M_0}\lambda^mM_0\\
   &+n\lambda^mT_{-n,0}+nr_{m,1}\lambda^{-n}M_0
   = -(m-n)T_{m-n,0}.
   \end{split}
\end{align}
By $\eqref{3}$ and $\eqref{4}$, we have
\begin{align*}
&-2m\lambda^nT_{m,0}-3m\lambda^m\frac{\partial T_{n,0}}{\partial M_0}M_0-3m\lambda^{m+2n}\frac{\partial T_{-n,0}}{\partial M_0}M_0-n\lambda^mT_{n,0}+n\lambda^{m+2n}T_{-n,0}\\
&=-(m+n)T_{m+n,0} -(m-n)\lambda^{2n}T_{m-n,0}.
\end{align*}
Setting $n=2$ and $m=1$ in the above equality, we have
\begin{align*}
-2\lambda^2T_{1,0}-3\lambda\frac{\partial T_{2,0}}{\partial M_0}M_0-3\lambda^5\frac{\partial T_{-2,0}}{\partial M_0}M_0-2\lambda T_{2,0}+2\lambda^5T_{-2,0}=-3T_{3,0} +\lambda^4T_{-1,0}.
\end{align*}
Taking (\ref{*5}), (\ref{*6}) and (\ref{*7}) into the above equality, we get $M_0\frac{\partial A}{\partial M_0}=A$. Thus $A=cM_0$, where $c\in\mathbb{C}$.
Therefore,
\begin{align}\label{*8}
T_{1,0}+\lambda^2T_{-1,0}=c_2M_0+c_1,
\end{align}
where $c_1,c_2\in\mathbb{C}$.
Using (\ref{*3}), (\ref{*4}) and (\ref{*8}), we obtain
\begin{align*}
T_{m,0}=\lambda^{m-1}\Big(mT_{1,0}+\frac{c_2}{2}(m^2-m)M_0+\frac{c_1}{2}(1-m)\Big), \;\;m\neq0,
\end{align*}
where $T_{1,0}\in\mathbb{C}[M_0]$.

Set
\begin{align*}
T_0=\frac{T_{1,0}}{\lambda}-\frac{c_2M_0}{2\lambda}-\frac{c_1}{2\lambda},a=\frac{c_2}{2\lambda},b=\frac{c_1}{2\lambda}.
\end{align*}
Then
\begin{align*}
T_{m,0}=m\lambda^mT_0+am^2\lambda^mM_0+b\lambda^m,\;\;m\neq0.
\end{align*}
Then we get
\begin{align}\label{pm2}
 p_m(L_0,M_0,Y_0)=m \lambda^m \sum\limits_{i=0}^{q}T_iY_0^i+\lambda^mY_0+am^2\lambda^mM_0+b\lambda^m,\;\;\forall m\in\mathbb{Z}^*,\;\;q\geq 1, \; a, b \in \mathbb{C}.
\end{align}
\end{proof}
\begin{lemma}\label{Lemma10}
\begin{eqnarray}
g_m(L_0,M_0,Y_0)&=&\lambda^mL_0+\frac{1}{M_0}\bigg[m\lambda^m\sum\limits_{i=0}^{q}
\left(-T_i+\frac{3}{i+1}T_i-\frac{3M_0}{i+1}\frac{\partial T_i}{\partial M_0}\right)Y_0^{i+1}\nonumber\\
&&-\frac{m^2}{2}\lambda^m M_0 \sum\limits_{i=0}^{q}iT_iY_0^{i-1}-\frac{m^2}{2}\lambda^m\left(\sum\limits_{i=0}^{q}T_iY_0^i\right)^2+3am^2\lambda^mM_0Y_0\nonumber\\
\label{*10}&&-(am^2M_0+b)m\lambda^m\sum\limits_{i=0}^{q}T_iY_0^i+\lambda^mR_m\bigg], \end{eqnarray}
where $a,$ $b\in\mathbb{C}$,  $\lambda\in\mathbb{C}^*$, $q\in \mathbb{N}$, $T_i$, $R_m\in M_0\mathbb{C}[M_0]$ for $i\in \{0, 1, \cdots, q\}$ and $m\in\mathbb{Z}$, and $R_0=0$.
\end{lemma}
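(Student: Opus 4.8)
The plan is to extract $f_{m,0}$ from the single structural relation in which it appears, namely $[L_m,Y_n]=-(m+n)Y_{m+n}$. Applying both sides to $1$ and expanding via $(A1)$ and $(A3)$ reproduces exactly \eqref{LY}, and the crucial feature is that $f_{m,0}$ enters there \emph{only} through $\partial f_{m,0}/\partial Y_0$, multiplied by $na_n$. Since Lemma \ref{Lemma5} and \eqref{pm2} already fix $a_m=\lambda^mM_0$ and the precise shape of $p_m$, substituting these turns \eqref{LY} into a polynomial identity in $M_0,Y_0$ whose only unknown is $\partial f_{m,0}/\partial Y_0$. Writing $S=\sum_{i=0}^{q}T_iY_0^i$, I would solve this identity for $M_0\,\partial f_{m,0}/\partial Y_0$ and then integrate once in $Y_0$.

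Concretely I would specialize $n=1$, the cheapest nonzero choice, so that only $p_1,p_m,p_{m+1}$ occur; each carries the same $S$-structure, so the right-hand side telescopes after factoring out $\lambda^{m+1}$. Expanding $-m(\partial_{Y_0}S+1)(mS+Y_0+am^2M_0+b)$ together with the $\partial_{Y_0}^2p_1$, $\partial_{M_0}p_1$ and $p_1$ contributions, the plain $Y_0$- and $b$-terms cancel, the plain $S$-term collapses to $2mS$, the plain $M_0$-term collapses to $3am^2M_0$, and what remains assembles into
\[
M_0\frac{\partial f_{m,0}}{\partial Y_0}=\lambda^m\Big(2mS-\tfrac{m^2}{2}\partial_{Y_0}(S^2)-mY_0\,\partial_{Y_0}S-m(am^2M_0+b)\partial_{Y_0}S-\tfrac{m^2}{2}M_0\,\partial_{Y_0}^2S-3mM_0\,\partial_{M_0}S+3am^2M_0\Big).
\]
Integrating in $Y_0$ and using $\int Y_0^i\,dY_0=Y_0^{i+1}/(i+1)$ turns the coefficient of $Y_0^{i+1}$ into $\tfrac{m}{i+1}\big((2-i)T_i-3M_0\,\partial_{M_0}T_i\big)=m\big(-T_i+\tfrac{3}{i+1}T_i-\tfrac{3M_0}{i+1}\partial_{M_0}T_i\big)$, while the remaining integrated terms reproduce $-\tfrac{m^2}{2}S^2$, $-\tfrac{m^2}{2}M_0\sum iT_iY_0^{i-1}$, $3am^2M_0Y_0$ and $-(am^2M_0+b)mS$; this is exactly the bracket in \eqref{*10}, with the $Y_0$-free constant of integration named $\lambda^mR_m$.

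The genuinely new step is to impose that $f_{m,0}$ lie in $\mathbb{C}[M_0,Y_0]$, i.e. that $M_0$ divide the whole bracket. Writing a bar for evaluation at $M_0=0$ and using that $R_m$ carries no $Y_0$, divisibility by $M_0$ requires the bracket to vanish at $M_0=0$, i.e.
\[
\bar R_m+m\sum_{i=0}^{q}\tfrac{2-i}{i+1}\bar T_iY_0^{i+1}-\tfrac{m^2}{2}\bar S^2-bm\bar S=0,\qquad \forall\,m\neq0.
\]
Since $\bar R_m$ is $Y_0$-free, the $Y_0$-dependent part vanishes on its own; separating powers of $m$, the $m^2$-coefficient forces $\bar S^2$ to be $Y_0$-free, hence $\bar S$ constant in $Y_0$ (so $\bar T_i=0$ for $i\geq1$), and the surviving $m$-coefficient $2m\bar T_0Y_0$ then forces $\bar T_0=0$. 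Thus every $\bar T_i=0$, i.e. $T_i\in M_0\mathbb{C}[M_0]$; with $M_0\mid T_i$ each integrated term is divisible by $M_0$, so polynomiality finally forces $M_0\mid R_m$, giving $R_m\in M_0\mathbb{C}[M_0]$, while $g_0=L_0\cdot1=L_0$ yields $f_{0,0}=0$, i.e. $R_0=0$.

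The main obstacle is the sheer volume of bookkeeping: correctly expanding the product $(\partial_{Y_0}S+1)(mS+Y_0+am^2M_0+b)$ and the square $S^2$, and then regrouping a large collection of $Y_0$-powers into the compact form of \eqref{*10}. The one conceptual point is the divisibility argument of the last paragraph, which is what upgrades $T_i\in\mathbb{C}[M_0]$ to $T_i\in M_0\mathbb{C}[M_0]$ and simultaneously forces $R_m\in M_0\mathbb{C}[M_0]$.
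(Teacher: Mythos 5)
Your proposal is correct, and its main line coincides with the paper's: both specialize \eqref{LY} to $n=1$, solve for $M_0\,\partial f_{m,0}/\partial Y_0$ (your displayed formula agrees term by term with the paper's), integrate in $Y_0$, and name the $Y_0$-free integration constant $\lambda^m R_m$, with $R_0=0$ read off from $g_0=L_0$. Where you genuinely diverge is in how the divisibility statements $T_i\in M_0\mathbb{C}[M_0]$ and $R_m\in M_0\mathbb{C}[M_0]$ are obtained. The paper establishes $M_0\mid T_i$ \emph{before} integrating: it takes the constant term with respect to $M_0$ of the unspecialized identity \eqref{LY} (general $m,n$), which collapses to the functional equation $2w_0=m\,\partial_{Y_0}w_0\cdot w_0+Y_0\,\partial_{Y_0}w_0+b\,\partial_{Y_0}w_0$ valid for all $m\in\mathbb{Z}^{*}$, forcing $w_0=0$; once that is known, every term of the integrated bracket is visibly divisible by $M_0$, so $r_m\in M_0\mathbb{C}[M_0]$ follows at once from polynomiality of $f_{m,0}$. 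You instead integrate first and then impose polynomiality on the bracket, evaluating at $M_0=0$ and separating powers of $m$ and of $Y_0$: the $m^2$-coefficient forces $\bar S^2$ to be $Y_0$-free, hence $\bar T_i=0$ for $i\geq 1$, the $m$-coefficient then kills $\bar T_0$, and $\bar R_m=0$ drops out of the same identity. Both arguments are sound necessary-condition derivations; yours is slightly more economical in that it uses only the $n=1$ instance of \eqref{LY} and a single divisibility argument to get both conclusions, while the paper's ordering invokes the two-parameter identity but makes the conclusion about $r_m$ trivial. The one point to be careful about in your version, which you do handle correctly, is that $\bar R_m$ may depend on $m$ arbitrarily, so the separation into powers of $m$ must be applied only to the $Y_0$-dependent part of the identity, where $\bar R_m$ does not appear.
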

\begin{proof}
Obviously, $\eqref{LY}$ holds when $m=0$ or $n=0$. So we can assume that $m\neq 0$ and $n\neq 0$.
Set $w= \sum\limits_{i=0}^{q}T_iY_0^i$. Then taking $\eqref{pm2}$ into $\eqref{LY}$, we obtain
\begin{eqnarray*}
[L_m,Y_n].1&=& -(m+n)((m+n)\lambda^{m+n}w+\lambda^{m+n}Y_0+a(m+n)^2\lambda^{m+n}M_0+b\lambda^{m+n})\\
&=&-m\Big(n\lambda^n\frac{\partial w}{\partial Y_0}+\lambda^n\Big)(m\lambda^mw+\lambda^mY_0+am^2\lambda^mM_0+b\lambda^m)\\
&&-\frac{m^2}{2}\Big(n\lambda^n\frac{\partial^2w}{\partial Y_0^2}\Big)\lambda^mM_0
-3m\Big(n\lambda^n\frac{\partial w}{\partial M_0}+an^2\lambda^n\Big)\lambda^mM_0\\
&&-n\lambda^m(n\lambda^nw+\lambda^nY_0+an^2\lambda^nM_0+b\lambda^n)
-n\frac{\partial f_{m,0}}{\partial Y_0}\lambda^nM_0.
\end{eqnarray*}
Obviously, if $m+n=0$, the above equality is equivalent to $\eqref{LY}$.
Setting $w_0=\sum\limits_{i=0}^{q}T'_iY_0^i$, which is the constant term of $w$ with respect to $M_0$, and $T'_{i}$ is the constant term of $T_i$ with respect to $M_0$, and considering the constant term of the equation, we have
\begin{align*}
-(m+n)((m+n)w_0+Y_0+b)=-m(n\frac{\partial w_0}{\partial Y_0}+1)(mw_0+Y_0+b)-n(nw_0+Y_0+b).
\end{align*}
By calculating, we have
\begin{align*}
2w_0=m\frac{\partial w_0}{\partial Y_0}w_0+\frac{\partial w_0}{\partial Y_0}Y_0+b\frac{\partial w_0}{\partial Y_0}.
\end{align*}
Since the above equality holds for any  $m\in \mathbb{Z}^\ast$, we obtain that $w_0=0$, that is $T'_i=0$ for $i\in \{0, 1, \cdots, q\}$. Therefore, $w$ can be divisible by $M_0$.

Setting $n=1$, we have
\begin{align*}
&\frac{\partial f_{m,0}}{\partial Y_0} = \frac{1}{M_0}\bigg[2m\lambda^mw-m\lambda^m\frac{\partial w}{\partial Y_0}(Y_0+b)-\frac{m^2}{2}\lambda^m\frac{\partial^2w}{\partial Y_0^2}M_0-3m\lambda^m\frac{\partial w}{\partial M_0}M_0\\
&\quad\quad\quad\quad-m^2\lambda^m\frac{\partial w}{\partial Y_0}w+3am^2\lambda^mM_0-am^3\lambda^m\frac{\partial w}{\partial Y_0}M_0\bigg].
\end{align*}
Then we get
{\small \begin{eqnarray}
f_{m,0}& =& \frac{1}{M_0}\bigg[-m\lambda^mw(Y_0+b)-\frac{m^2}{2}\lambda^m\frac{\partial w}{\partial Y_0}M_0-m^2\lambda^m\frac{w^2}{2}+3m\lambda^m\sum\limits_{i=0}^{q}\frac{1}{i+1}T_iY_0^{i+1}\nonumber\\
&&\label{*11}\quad -3m\lambda^m\sum\limits_{i=0}^{q}\frac{M_0}{i+1}\frac{\partial T_i}{\partial M_0}Y_0^{i+1}+3am^2\lambda^mM_0Y_0-am^3\lambda^mwM_0+r_m\bigg],
\end{eqnarray}}
where $r_m\in M_0\mathbb{C}[M_0]$, $m\in \mathbb{Z}^\ast$.

Set $R_m=\frac{r_m}{\lambda^m}$. Then taking (\ref{*11}) in to
$g_m(L_0,M_0,Y_0)=\lambda^mL_0+f_{m,0}$, we get (\ref{*10}).  Note that $R_0=0$, due to $g_0(L_0, M_0, Y_0)=L_0$. Then the proof is finished.
\end{proof}

{\bf The proof of Theorem \ref{theorem of isomorphism}}:

By Lemmas \ref{Lemma1}-\ref{Lemma10}, we have
$$
\begin{aligned}
&g_m(L_0,M_0,Y_0)=\lambda^mL_0+\frac{1}{M_0}\Bigg[m\lambda^m\sum\limits_{i=0}^{q}
\left(-T_i+\frac{3}{i+1}T_i-\frac{3M_0}{i+1}\frac{\partial T_i}{\partial M_0}\right)Y_0^{i+1}\\
&\quad\quad\quad\quad-\frac{m^2}{2}\lambda^m M_0 \sum\limits_{i=0}^{q}iT_iY_0^{i-1}
-\frac{m^2}{2}\lambda^m\bigg(\sum\limits_{i=0}^{q}T_iY_0^i\bigg)^2+3am^2\lambda^mM_0Y_0\\
&\quad\quad\quad\quad-(am^2M_0+b)m\lambda^m\sum\limits_{i=0}^{q}T_iY_0^i+\lambda^mR_m\Bigg],\\
&a_m(L_0,M_0,Y_0)=\lambda^mM_0,\\
&p_m(L_0,M_0,Y_0)=m \lambda^m \sum\limits_{i=0}^{q}T_iY_0^i+\lambda^mY_0+am^2\lambda^mM_0+(1-\delta_{m, 0})b\lambda^m,
\end{aligned}
$$
where $a,b\in\mathbb{C},\ m\in\mathbb{Z},\ \lambda\in\mathbb{C}^*$, $T_i$, $R_m\in M_0\mathbb{C}[M_0]$, $R_0=0$ and $q\in\mathbb{N}$.

Then by Lemma \ref{lemma3.1} and Proposition \ref{lie}, $B$ is a module of $\mathfrak{tsv}$ determined by $g_m(L_0, M_0, Y_0)$, $a_m(L_0, M_0, Y_0)$ and $p_m(L_0, M_0, Y_0)$ as above.
Define a linear map $\varphi: B\rightarrow \Phi(\lambda, a, b, q, \{T_i\}, \{R_j\} )=\mathbb{C}[s, t, v]$ as follows
\begin{eqnarray}
\varphi(L_0^iM_0^jY_0^k)=s^it^jv^k, \;\; \forall i, j, k\in \mathbb{Z}_+.
\end{eqnarray}
Then it is easy to see that $\varphi$ is an isomorphism of $\mathfrak{tsv}$-modules. Therefore,
$B\cong \Phi(\lambda, a, b, q, \{T_i\}, \{R_j\})$. The proof is finished.
 \qed

\end{document}